\newcommand{\leqs}{\leqslant}
\newcommand{\geqs}{\geqslant}
\newcommand{\PGL}{\operatorname{PGL}}
\newcommand{\GL}{\operatorname{GL}}
\newcommand{\SL}{\operatorname{SL}}
\newcommand{\PSL}{\operatorname{PSL}}
\newcommand{\Sp}{\operatorname{Sp}} 
\newcommand{\SO}{\operatorname{SO}} 
\newcommand{\OO}{\operatorname{O}} 
\newcommand{\SU}{\operatorname{SU}} 
\newcommand{\FF}{\mathbb{F}}
\newcommand{\imod}[1]{\allowbreak\mkern4mu({\operator@font mod}\,\,#1)}
\renewcommand{\mod}[1]{\imod{#1}}
\renewcommand{\leq}{\leqs}
\renewcommand{\geq}{\geqs}
\theoremstyle{plain}
\newtheorem{theorem}{Theorem} 
\newtheorem{corol}[theorem]{Corollary}
\newtheorem{thm}{Theorem}[section] 
\newtheorem{lem}[thm]{Lemma}
\newtheorem{cor}[thm]{Corollary} 
\newtheorem*{theorem*}{Theorem} 
\newtheorem*{conj*}{Conjecture}
\theoremstyle{definition}
\begin{document}

\title{Commuting Involutions and Elementary Abelian Subgroups of Simple Groups}

\author{Robert M. Guralnick}
\address{R.M. Guralnick, Department of Mathematics, University of Southern California, Los Angeles, CA 90089-2532, USA}
\email{guralnic@usc.edu}

\author{Geoffrey R. Robinson}
\address{G. R. Robinson, Department of Mathematics, King's College, 
Aberdeen,  AB24 3FX, UK}
\email{g.r.robinson@abdn.ac.uk}

\dedicatory{Dedicated to the memory of our good friend Jan Saxl}

\date{\today} 

\thanks{The first  author was partially supported by the NSF
grant DMS-1901595 and a Simons Foundation Fellowship 609771} 

\thanks{We thank Thomas Breuer for computations with sporadic groups and 
Ross Lawther for comments on the exceptional groups in characteristic $2$.}

\begin{abstract}    Motivated in part by representation theoretic questions, we prove that if $G$ is a finite quasi-simple group, then there exists
an elementary abelian subgroup of $G$ that intersects every conjugacy class
of involutions of $G$.  

\end{abstract}

\maketitle

\section{Introduction} \label{s:intro}   Let $G$ be a finite group.  An elementary abelian subgroup $E$ of
$G$ is a called a broad subgroup if every involution of $G$ is conjugate to an element of $E$. This definition is motivated in part
by the fact that (using a Theorem of R.Kn\"orr \cite{K}) an irreducible character which vanishes identically on non-identity elements of $E$ lies 
in a $2$-block of defect zero. This in turn allows us to prove that when $G$ contains a broad subgroup, the number of irreducible characters of $G$ 
which lie in $2$-blocks of positive defect of $G$ is at most $|C_{G}(t)|$ for some (non-trivial) involution $t \in G$. Our main result (which depends upon
the classification of finite simple groups)  is:  

\begin{theorem} \label{t:main}  Let $G$ be a finite quasi-simple group.   Then $G$ contains a broad subgroup.
\end{theorem}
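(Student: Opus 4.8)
\medskip
\noindent\emph{Strategy of proof.}
The plan is to prove Theorem~\ref{t:main} by invoking the classification of finite simple groups and exhibiting a broad subgroup in each family, having first reduced the quasi-simple case to the simple case. Write $Z = Z(G)$ and $S = G/Z$. An involution of $G$ maps either to an involution of $S$ or to the identity, the latter forcing it into $Z$; and an involution class of $S$ either admits no involutory lift to $G$ (and is then irrelevant) or does admit one. So over $S$ one needs an elementary abelian $E \le G$ containing all involutions of $Z$ and having a conjugate meeting each liftable involution class. Since the $2$-part of $Z$ is cyclic for all but finitely many quasi-simple groups, this is a mild extra constraint in general; the short list of quasi-simple groups whose Schur multiplier has $2$-rank $\ge 2$ (certain covers over small fields of $\PSL_n(q)$, $\mathrm{PSU}_n(q)$, $\Sp_{2n}(q)$, $\Omega^\pm_{2n}(q)$, of $A_6$ and $A_7$, and a few sporadic groups) I would dispose of by hand. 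It thus essentially remains to find a broad subgroup of each simple group $S$.

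For $S = A_n$ with $n \ge 5$, the involutions have cycle type $2^{2k}$ with $1 \le k \le \lfloor n/4\rfloor$; fixing $\lfloor n/2\rfloor$ disjoint transpositions $t_1, \dots, t_{\lfloor n/2\rfloor}$, the group $E = \langle t_1, \dots, t_{\lfloor n/2\rfloor}\rangle \cap A_n$ is elementary abelian and $t_1 t_2 \cdots t_{2k}$ realises the class $2^{2k}$, so $E$ is broad; the double covers (and the exceptional covers of $A_6$, $A_7$) need only a short additional check. For $S$ of Lie type in odd characteristic, where involutions are semisimple, I would realise the relevant simply connected group as a classical matrix group and use ``$\pm 1$'' subgroups of a maximal torus: e.g.\ in $\SL_n(q)$ the diagonal matrices with entries in $\{\pm 1\}$ and determinant $1$ form an elementary abelian group of rank $n-1$ whose members' $(-1)$-eigenspace dimensions run over all the relevant values, classifying the ``untwisted'' semisimple involution classes. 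These toral subgroups must be supplemented, for the ``twisted'' involution classes occurring e.g.\ in $\PSL_n(q)$ and $\mathrm{PSU}_n(q)$ with $n$ even, by monomial or non-toral elementary abelian $2$-subgroups (images of quaternion- or extraspecial-type $2$-subgroups of the simply connected group), and some care is needed in passing to the simple quotient. The exceptional groups of Lie type in odd characteristic, whose semisimple involution classes are few and explicitly classified, are handled similarly via an elementary abelian $2$-subgroup of maximal rank sitting inside a commuting product of fundamental $\SL_2$'s.

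For $S$ of Lie type in defining characteristic $2$, involutions are unipotent and, being involutions, have all Jordan blocks of size $\le 2$. In $\GL_n(q)$ with $q$ even the classes are indexed by the rank $r$, $1 \le r \le \lfloor n/2\rfloor$, of an off-diagonal block, and the abelian group of block matrices $\left(\begin{smallmatrix} I & B \\ 0 & I\end{smallmatrix}\right)$ is elementary abelian --- the field has characteristic $2$ --- and contains representatives of every rank, so it restricts to a broad subgroup of $\SL_n(q)$ and $\PSL_n(q)$. For $\Sp_{2n}(q)$ and $\Omega^\pm_{2n}(q)$ there are further unipotent involution classes --- the $a_\ell, b_\ell, c_\ell$ classes of Aschbacher--Seitz, distinguished by fine invariants that in characteristic $2$ amount to the data of an associated quadratic form --- and I would take $E$ to be the unipotent radical of the stabiliser of a maximal totally isotropic subspace, which is again elementary abelian in characteristic $2$; then one checks against the Aschbacher--Seitz list that varying the symmetric ``form'' parametrising the elements of $E$ realises every class. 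The exceptional groups in characteristic $2$ are treated in the same spirit, using the known tables of unipotent classes (the computations of Lawther) to locate an abelian unipotent subgroup --- inside a parabolic radical, or spanned by pairwise commuting root subgroups --- meeting every involution class. Finally, the sporadic groups and the Tits group form a finite list, checked directly by computer (as done by Breuer), typically with a maximal elementary abelian $2$-subgroup of a Sylow $2$-subgroup.

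I expect the main obstacle to be precisely the defining characteristic $2$ case for the symplectic, orthogonal and exceptional groups: there the unipotent involution classes are numerous and their fine invariants delicate, so proving that a single fixed elementary abelian unipotent subgroup meets all of them simultaneously requires a careful comparison with the Aschbacher--Seitz and Liebeck--Seitz classifications of unipotent classes. A secondary technical nuisance is the reduction in the first paragraph for those few quasi-simple groups with $2$-rank at least $2$ in the Schur multiplier, where the full preimage of a broad subgroup of $S$ need not be elementary abelian and must be repaired individually; and, in the odd characteristic classical case, ensuring that the twisted involution classes and the passage to the simple quotient are covered by a single elementary abelian subgroup.
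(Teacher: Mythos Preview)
Your overall plan matches the paper's, and for alternating groups, sporadic groups, and linear/unitary/symplectic groups in characteristic $2$ you are essentially reproducing its arguments. There is, however, a genuine gap in your treatment of \emph{orthogonal} groups in characteristic $2$. You propose taking $E$ to be the unipotent radical of the stabiliser of a maximal totally singular subspace, as for $\Sp_{2n}(q)$. But in $\SO^{+}_{2n}(q)$ that radical is parametrised by \emph{alternating} (not symmetric) forms on an $n$-space, and every involution $g$ lying in it satisfies $(gv,v)=0$ for all $v\in V$ with respect to the underlying symplectic form. In the Aschbacher--Seitz/Liebeck--Seitz labelling this means the radical meets only the classes $W(2)^e\oplus W(1)^f$; the classes $V(2)^2\oplus W(2)^e\oplus W(1)^f$ --- precisely the involutions with $(gv,v)\neq 0$ for some $v$ --- fix no maximal totally singular subspace pointwise and hence are not conjugate into any such radical. (The same obstruction applies to $\SO^{-}_{2n}(q)$.) The paper's construction here is quite different: decompose $V$ as an orthogonal sum of nondegenerate $4$-spaces and take $E$ to be a Sylow $2$-subgroup of the product of the $\SO_4$'s; this is elementary abelian and visibly contains both $W(2)$-type and $V(2)^2$-type involutions.

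On the odd-characteristic side your approach would work but is more laborious than the paper's. The ``nuisance'' you anticipate --- supplementing the $\pm 1$ diagonal torus by non-toral pieces to catch the twisted involution classes in $\PSL_n(q)$ and the like --- is avoided entirely in the paper by proving a stronger statement: there is always a \emph{toral} broad subgroup. The device is to work in a convenient form of the group (e.g.\ $\SL_{2m}(q)$) and exhibit a single maximal torus $A$ (a product of nonsplit $\GL_2(q)$-tori, hence containing the full $4$-torsion of a maximal torus of the algebraic group) meeting every class of elements whose \emph{square} is central; then $A$ modulo the centre is automatically toral and broad in the simple quotient, and passage between isogenous forms is handled uniformly because tori lift to tori. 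No non-toral patching is needed.
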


Notice that it is immediate that a direct product of finite groups has a broad subgroup if each direct factor has a broad subgroup. 

We have the following corollary, which may be of independent interest:

\begin{corol}  Let $G$ be a finite quasi-simple group.  If $x$ and $y$ are involutions of
$G$, then $x$ commutes with some conjugate of $y$.
\end{corol}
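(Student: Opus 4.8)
\emph{Proof proposal.} The plan is to deduce this immediately from Theorem~\ref{t:main}, using nothing beyond the fact that elementary abelian groups are abelian and that conjugacy is an equivalence relation. By Theorem~\ref{t:main}, $G$ contains a broad subgroup $E$; that is, $E$ is elementary abelian and every involution of $G$ is conjugate to an element of $E$. First I would pick $x', y' \in E$ with $x'$ conjugate to $x$ and $y'$ conjugate to $y$ (these exist by the defining property of $E$, and both are nontrivial, hence are themselves involutions of $E$, since $x$ and $y$ are involutions). As $E$ is abelian, $x'$ and $y'$ commute.

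Next I would transport this relation back to $x$. Write $x = g x' g^{-1}$ for some $g \in G$. Then
\[
x \cdot (g y' g^{-1}) = g (x' y') g^{-1} = g (y' x') g^{-1} = (g y' g^{-1}) \cdot x ,
\]
so $x$ commutes with $g y' g^{-1}$. Since $g y' g^{-1}$ is conjugate to $y'$, and $y'$ is conjugate to $y$, transitivity gives that $g y' g^{-1}$ is a conjugate of $y$. Hence $x$ commutes with a conjugate of $y$, as claimed. (If $G$ has no involutions the statement is vacuous, so that case needs no comment; in fact the same argument shows the slightly stronger assertion that $y$ has a conjugate lying in $C_G(x)$.)

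I do not expect any real obstacle here: the corollary is a purely formal consequence of the existence of a single elementary abelian subgroup meeting every involution class. All of the difficulty of the result sits in the proof of Theorem~\ref{t:main}, which goes through the classification of finite simple groups; the corollary only repackages its conclusion.
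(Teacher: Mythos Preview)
Your argument is correct and matches the paper's treatment: the paper states this corollary immediately after Theorem~\ref{t:main} without a separate proof, regarding it as an immediate consequence of the existence of a broad subgroup, which is exactly the deduction you spell out.
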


The promised application to character theory is provided by:

\begin{corol} \label{c:positive} 
 Let $G$ be a finite group with no non-trivial normal subgroup of odd order. Then there is a non-trivial involution $t \in F^{\ast}(G)$ such that the total number of irreducible characters of $G$ which do not lie over $2$-blocks of defect zero of $F^{\ast}(G)$ is at most $|C_{G}(t)|$.
\end{corol}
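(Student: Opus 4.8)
The plan is to pass to $L:=F^{\ast}(G)$, use Theorem~\ref{t:main} to produce a broad subgroup of $L$, invoke Kn\"orr's theorem inside $L$, and then transfer the resulting estimate up to $G$ via block theory and Clifford theory. First I record the structure: since $G$ has no non-trivial normal subgroup of odd order, $O_{2'}(G)=1$, so $O_p(G)=1$ for odd $p$ and $L=F^{\ast}(G)=O_2(G)E(G)$, where $E(G)$ is a central product of quasi-simple groups $Q_1,\dots,Q_k$ (the components of $G$) and $C_G(L)=Z(L)$. Two elementary remarks: $\chi\in\mathrm{Irr}(G)$ lies over a $2$-block of defect zero of $L$ if and only if every irreducible constituent of $\chi|_L$ does so; and a character of $L$ lies in a $2$-block of defect zero precisely when it vanishes on all $2$-singular elements of $L$. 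In particular, if $O_2(L)\neq 1$ then $L$ has no $2$-block of defect zero, and the assertion reduces to producing an involution $t\in L$ with $k(G)\le |C_G(t)|$; I handle this degenerate case separately, taking $t\in Z(O_2(G))\cap Z(S)$ for $S\in\mathrm{Syl}_2(G)$ and bounding $k(G)$ directly. So from now on assume $O_2(L)=1$.

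Next I build a broad subgroup of $L$. By Theorem~\ref{t:main} each component $Q_i$ contains a broad subgroup $E_i$; the $E_i$ pairwise commute and are elementary abelian of exponent $2$, and since $O_2(L)=1$ the identifications made in forming the central product carry no additional $2$-torsion, so $E:=\langle E_1,\dots,E_k\rangle$ is an elementary abelian subgroup of $L$ meeting every conjugacy class of involutions of $L$, i.e.\ a broad subgroup of $L$. Applying to $L$ the consequence of Kn\"orr's theorem recorded in the Introduction, a character $\theta\in\mathrm{Irr}(L)$ that vanishes on $E\setminus\{1\}$ lies in a $2$-block of defect zero of $L$; the converse is immediate. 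Combining this with the remarks above and with the standard fact (Kn\"orr) that a $2$-block of $G$ covering a positive-defect block of $L$ itself has positive defect, any $\chi\in\mathrm{Irr}(G)$ that does \emph{not} lie over a $2$-block of defect zero of $L$ lies in a $2$-block of $G$ of positive defect. Hence the number we must bound is at most the number of $\chi\in\mathrm{Irr}(G)$ lying in $2$-blocks of positive defect, i.e.\ $k(G)$ minus the number of irreducible characters of $G$ of $2$-defect zero.

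It therefore suffices to exhibit a non-trivial involution $t\in L=F^{\ast}(G)$ with
$$\#\{\chi\in\mathrm{Irr}(G):\chi\ \text{lies in a $2$-block of positive defect}\}\ \le\ |C_G(t)|.$$
The strategy is to choose $t$ to be an involution of $E$ with $|C_G(t)|$ as large as the structure of $L$ permits — for instance one coming from the component contributing the largest factor, or a $2$-central involution of $L$ — and to show that \emph{every} such $\chi$ satisfies $\chi(t)\neq 0$; column orthogonality, $\sum_{\chi\in\mathrm{Irr}(G)}|\chi(t)|^2=|C_G(t)|$, then gives the bound. To deduce $\chi(t)\neq 0$ one writes the Clifford decomposition $\chi|_L=e_\chi(\theta_1+\dots+\theta_s)$ with the $\theta_i$ the $G$-conjugates of a constituent $\theta$, which lies in a positive-defect $2$-block of $L$; by the Kn\"orr consequence $\theta$ does not vanish on $E\setminus\{1\}$, and the point is to use the product structure $E=\langle E_1,\dots,E_k\rangle$ together with the explicit form of the broad subgroups coming from the proof of Theorem~\ref{t:main} — which lets one localise the non-vanishing to a single well-chosen involution — to rule out cancellation in $\chi(t)=e_\chi\sum_i\theta_i(t)$. (Note that odd-degree $\chi$ are automatically non-zero on every involution, since $\chi(t)\equiv\chi(1)\pmod 2$, so only the even-degree characters require this analysis.)

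The main obstacle is exactly this last step: producing a \emph{single} involution $t$, lying in $F^{\ast}(G)$ rather than merely in $G$, that simultaneously witnesses the non-vanishing of every irreducible character of $G$ in a positive-defect $2$-block, starting only from non-vanishing information about characters of $F^{\ast}(G)$ on a broad subgroup. A naive passage through Clifford theory loses a factor of $[G:F^{\ast}(G)]$, so one has to play off the size of $|C_G(t)|$ (which for components of classical Lie type is comparable to $|F^{\ast}(G)|^{1/2}$) against the detailed structure of the broad subgroups furnished by Theorem~\ref{t:main}; this balance is presumably the reason the statement is phrased in terms of $F^{\ast}(G)$ rather than of $G$ itself.
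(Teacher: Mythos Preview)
Your proposal has a genuine gap, and you essentially acknowledge it yourself in the final two paragraphs. Your strategy is to find a \emph{single} involution $t\in F^{\ast}(G)$ at which every irreducible character $\chi$ lying over a positive-defect $2$-block of $F^{\ast}(G)$ is nonzero, and then apply column orthogonality. This statement is generally false: characters in positive-defect blocks can and do vanish at individual involutions, and no amount of Clifford-theoretic analysis of the restriction $\chi|_L$ will rule out cancellation for all such $\chi$ simultaneously at a fixed $t$. The difficulty you identify is not a technical wrinkle but a dead end for this line of attack.

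The paper avoids this obstacle by an averaging argument rather than a pointwise one. First, an extension of Kn\"orr's theorem (proved as a separate lemma) shows directly that if $\chi\in\mathrm{Irr}(G)$ lies over a positive-defect $2$-block of $N=F^{\ast}(G)$ then $\chi$ does not vanish identically on the involutions of $N$, hence not on $E^{\#}$. The key additional input, which your proposal is missing entirely, is a result of Robinson: for any irreducible character $\chi$ of $G$ that does not vanish identically on the nonidentity elements of an elementary abelian subgroup $E$, one has $\sum_{a\in E^{\#}}|\chi(a)|^{2}\ge |E|-1$. Summing this over all $k_{+}$ relevant characters and comparing with $\sum_{a\in E^{\#}}\sum_{\chi}|\chi(a)|^{2}\le\sum_{a\in E^{\#}}|C_{G}(a)|$ from column orthogonality yields $\sum_{a\in E^{\#}}|C_{G}(a)|\ge k_{+}(|E|-1)$, and pigeonhole produces some $t\in E^{\#}$ with $k_{+}\le |C_{G}(t)|$. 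Your treatment of the case $O_{2}(G)\ne 1$ is likewise incomplete; the paper handles it by the same averaging argument with $Z=\Omega_{1}(Z(O_{2}(G)))$ in place of $E$, noting that no irreducible character restricts to a multiple of the regular character of $Z$.
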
	

This applies in particular when $G$ is a non-Abelian simple group, and since $G = {\rm SL}(2,2^n)$ (for $n \geq 2)$ has exactly $2^{n}$ irreducible characters which lie in $2$-blocks of positive defect, 
while $|C_{G}(t)| = 2^{n}$ for each non-trivial involution $t \in G$, the inequality of Corollary 3 may be sharp.  
If $G$ is quasisimple and the center has even order, then there is a central element of order $2$ and so the corollary
is not useful.  

Note that there is no direct analog of the theorem for odd primes.   For example if $p \ge 5$, let $S=\SL_n(p)$ with 
$4 \le n \le  p$.   Since $Z(S)$ has order prime to $p$, it suffices to produce an example in $S$.  
  Let $x$ be a regular unipotent element (i.e. $x$ has a single Jordan block of size $n$).    Note that 
  $x$ has order $p$.  
Then the centralizer of $x$ is an elementary abelian $p$-group of rank $n-1$.  It is straightforward
to check that the only elements $y \in C_S(x)$ which have two Jordan blocks have Jordan blocks
of sizes $(n \pm 1)/2$ if $n$ is odd and  two Jordan blocks of size $n/2$ if $n$ is even.  Thus, $x$ does
not commute with an element of order $p$ that has Jordan blocks of size $n-1$ and $1$.   

 For $p=3$, we give an example where elements of order $3$ are semisimple.  Let $S=\SL_9(q)$
with $q \equiv 4 \mod 9$.    Then $Z= Z(S) = \langle z \rangle$ has order $3$.   Let $x \in S$ 
satisfy $x^3=z$.   So in $G=S/Z$,  $xZ$ is an element of order $3$.  Let $y \in S$ be an element
of order $3$ with a $7$ dimensional eigenspace with eigenvalue $1$.  We claim that
no conjugate of $y$ commutes with $x$ modulo $Z$.    Suppose there is a conjugate $y'$ of $y$
commuting with $x$ modulo $Z$.   Then replacing $y'$ by $y$, we see that  $[x,y]  = u  \in Z$.   Then
$y$ is conjugate to $yu$ whence by considering eigenvalues, $u=1$.  So $x$ and $y$ commute
in $S$.  Since $y$ has a $1$-dimensional eigenspace,  $x$ must preserve this space.  However, 
$x$ has no eigenvalues in the base field, a contradiction.        

The result also fails in general for almost simple groups for $p=2$.  
Let $G = \OO^+_{4m}(2^a)$.   There are two conjugacy classes of involutions in $\SO_{4m}^+(2^a)= \Omega_{4m}^+(2^a)$
interchanged by an involution in $G$.   Clearly such an outer involution cannot commute with
any element of either of the two conjugacy classes interchanged.   

Note that all three families of counterexamples above are also counterexamples to the corollary.  
We also remark that it is easy to check that 
the quaternion group $Q_{8}$ is the only extra-special $2$-group which has a broad subgroup, while it is clear that for $p$ odd, 
a non-Abelian $p$-group of exponent $p$ never has a broad elementary $p$-subgroup.

In the next three sections, we prove Theorem \ref{t:main} for the various families of quasi-simple groups.  In the last two sections,
we prove Corollary \ref{c:positive} and discuss some other character theoretic results.

\section{Alternating and Sporadic Groups}

We make a few preliminary remarks before starting the proof.   We may assume that $G$ has no odd order
normal subgroups and in particular if $G$ is quasisimple, we may assume that the center is a $2$-group.

If $G$ has a unique class of noncentral involutions, then the result holds by considering the  abelian
subgroup $E = \langle Z(G), x \rangle$ where $x$ is a noncentral involution.   If $G$ has $3$ conjugacy classes
of involutions, $C_1, C_2$ and $C_3$ and there exist $x_i \in C_i$ with $x_1x_2x_3=1$, then 
the elementary abelian subgroup $\langle x_1, x_2 \rangle$ intersects each $C_i$.   If $H$ is a subgroup of $G$
and $H$ intersects every conjugacy class of involutions of $G$ and $H$ has a broad subgroup, then so does $G$.  

The case of alternating groups is trivial. 

\begin{lem}  Let $G=A_n, n \ge 5$.  Let $E$ be the  elementary abelian subgroup  of $S_n$
with all orbits of size $2$ if $n$ is even and all orbits but one of size $2$ if
$n$ is odd.    Then every involution in $S_n$ is conjugate to an element of $E$ 
by an element of $A_n$.
\end{lem}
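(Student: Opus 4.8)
The plan is to realise $E$ completely explicitly and then reduce the whole statement to the standard criterion for when $S_n$-conjugacy can already be achieved inside $A_n$.

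First I would fix coordinates: set $m=\lfloor n/2\rfloor$, pair the points as $\{1,2\},\{3,4\},\dots,\{2m-1,2m\}$ (leaving the point $n$ fixed when $n$ is odd), and take $E=\langle t_1,\dots,t_m\rangle$ with $t_i=(2i-1\,\,2i)$. This is elementary abelian of rank $m$, its orbits on $\{1,\dots,n\}$ are exactly the chosen pairs (together with the fixed point when $n$ is odd), and for each $0\le k\le m$ the element $e_k:=t_1t_2\cdots t_k\in E$ is an involution of cycle type $2^k1^{n-2k}$. Since every involution of $S_n$ has cycle type $2^k1^{n-2k}$ with $1\le k\le m$, the subgroup $E$ already meets every conjugacy class of involutions of $S_n$; the only thing left is to show that the conjugating element may be taken in $A_n$.

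Next, given an involution $\sigma\in S_n$ of cycle type $2^k1^{n-2k}$, I choose $g\in S_n$ with $g\sigma g^{-1}=e_k$ (possible, since cycle type determines the $S_n$-class). If $g\in A_n$ we are done; otherwise replace $g$ by $g'=t_1g$. Because $k\ge 1$, the transposition $t_1=(1\,\,2)$ is one of the $2$-cycles of $e_k$ and hence commutes with $e_k$, so $g'\sigma g'^{-1}=t_1e_kt_1=e_k$, while $g'$ is an even permutation. Equivalently, this is the familiar observation that an $S_n$-class of involutions never splits in $A_n$, because $C_{S_n}(\sigma)$ contains the odd permutation $t_1$ whenever $\sigma$ has at least one $2$-cycle. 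Either way, $\sigma$ is $A_n$-conjugate to $e_k\in E$.

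I do not expect a genuine obstacle: the entire content is the explicit description of $E$ and the one-line parity correction, and the hypothesis $n\ge 5$ is not actually used here (it is present only because the ambient theorem concerns quasi-simple groups). The only points requiring care are bookkeeping: verifying that the exponents $k$ realised by elements of $E$ run over all values $1,\dots,\lfloor n/2\rfloor$ for both parities of $n$, and noting that parity is automatically respected, so that an odd involution of $S_n$ is $A_n$-conjugate to an odd element of $E$ of the required cycle type (which $E$ does contain).
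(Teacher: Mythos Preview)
Your argument is correct: the explicit description of $E$, the observation that $e_k=t_1\cdots t_k$ realises every involution cycle type, and the parity correction via $t_1\in C_{S_n}(e_k)$ are exactly what is needed. The paper in fact gives no proof of this lemma at all, declaring the alternating case ``trivial'', so your write-up simply supplies the routine details the authors chose to omit.
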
 

\begin{lem}  Let $G=2A_n, n \ge 5$.   Then $G$ contains a broad subgroup.
\end{lem}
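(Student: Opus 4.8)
The plan is to exhibit a broad subgroup of $G=2A_n$ explicitly, built from lifts to $G$ of products of disjoint transpositions in $A_n$. Write $z$ for the central involution of $G$ and $\pi\colon G\to A_n$ for the covering map, so $\ker\pi=\langle z\rangle$. We will use the following standard facts about the double cover of the symmetric group: if $t_1,\dots,t_r\in S_n$ are pairwise disjoint transpositions and $\widetilde t_i\in\pi^{-1}(S_n)$ are chosen preimages, then $\widetilde t_i\widetilde t_j=z\,\widetilde t_j\widetilde t_i$ for $i\ne j$; consequently, writing $\widetilde t_I=\prod_{i\in I}\widetilde t_i$ for $I\subseteq\{1,\dots,r\}$, one has $[\widetilde t_I,\widetilde t_J]=z^{\,|I|\,|J|-|I\cap J|}$ and, whenever $|I|$ is even, $\widetilde t_I^{\,2}=z^{\binom{|I|}{2}}$. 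Since $\binom{k}{2}$ is even exactly when $k\equiv 0,1\pmod 4$, it follows that the noncentral involutions of $G$ are precisely the $\pi$-preimages of the involutions of $A_n$ of cycle type $2^k1^{n-2k}$ with $k$ a positive multiple of $4$ (so in particular $k\leq n/2$); and since the parts of such a cycle type are not pairwise distinct, each of these types forms a single conjugacy class of $A_n$. Note finally that if $E\leq G$ contains $z$, then $E$ contains, along with any element, both of its $\pi$-preimages, so $E$ meets a $G$-class of noncentral involutions as soon as it meets the full $\pi$-preimage in $G$ of the corresponding $A_n$-class.

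Now set $s=\lfloor n/8\rfloor$, choose pairwise disjoint $8$-element subsets $B_1,\dots,B_s$ of $\{1,\dots,n\}$, and for each $p$ let $\rho_p\in A_n$ be a product of four disjoint transpositions supported on $B_p$, with a chosen preimage $\widetilde\rho_p\in G$. The $4s$ transpositions occurring here are pairwise disjoint, so by the formulas above each $\widetilde\rho_p$ has order dividing $2$, and $\widetilde\rho_p$ commutes with $\widetilde\rho_q$ for $p\ne q$ (the exponent $16-0$ is even). Hence $E:=\langle z,\widetilde\rho_1,\dots,\widetilde\rho_s\rangle$ is elementary abelian and contains $z$. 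Moreover, for $1\leq i\leq s$ the element $\widetilde\rho_1\cdots\widetilde\rho_i\in E$ has $\pi$-image of cycle type $2^{4i}1^{n-8i}$, and as $i$ runs over $1,\dots,s$ these are exactly the cycle types of the noncentral involutions of $G$.

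It remains to check that $E$ is broad. Let $x\in G$ be an involution. If $x=z$ then $x\in E$. Otherwise $\pi(x)$ has cycle type $2^{4i}1^{n-8i}$ for some $1\leq i\leq s$, hence $\pi(x)$ is $A_n$-conjugate to $\rho_1\cdots\rho_i$; lifting a conjugating element of $A_n$ to $G$ carries $x$ into $\pi^{-1}(\{\rho_1\cdots\rho_i\})$, which lies in $E$ since $z\in E$. Thus every involution of $G$ is conjugate into $E$, so $E$ is a broad subgroup. (For $5\leq n\leq 7$ we have $s=0$ and $z$ is the only involution of $G$, so this is the trivial case $E=\langle z\rangle$, already subsumed above.) The one substantive point is the determination of the involutions of the double cover, i.e.\ the square formula $\widetilde t_I^{\,2}=z^{\binom{|I|}{2}}$ together with the commutator relation; granting these—which follow from the classical theory of the covering groups of symmetric groups, or from a direct computation with a presentation of the double cover of $S_n$—the rest is routine bookkeeping with cycle types.
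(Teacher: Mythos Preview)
Your proof is correct and follows essentially the same approach as the paper: partition $\{1,\ldots,n\}$ into blocks of size~$8$ and lift products of four disjoint transpositions supported on each block. The paper takes a slightly larger $E$ (the full regular $(\Z/2)^3$ on each $8$-block rather than a single element per block) and verifies that its preimage in $2A_n$ is elementary abelian via the cleaner observation that every element of the preimage is an involution, thereby bypassing your explicit commutator/square computations in the double cover.
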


\begin{proof}  If $x \in A_n$ is an involution, then $x$ lifts to an involution in $G$ if and only 
if the number of points moved by $x$ is a multiple of $8$.   Now choose a partition of 
$n$ into subsets  $X_0, X_1, \ldots, X_d$ where $|X_i|=8$ for $i > 0$ and $|X_0| < 8$.    Let $E$ be the elementary
abelian $2$-group  $E_1 \times \ldots \times E_d$ where $E_i$ acts as a regular elementary
abelian group of order $8$ on $X_i$.   Note that if $x \in E$, then $x$ moves $8e$  points for some
$0 \le e \le d$ and so every element in $E$ lifts to an involution in $G$, whence the lift of $E$ to $G$
is also elementary abelian.
\end{proof}

All but five of the sproadic groups have at most $2$ noncentral conjugacy classes of 
involutions and so the observations above yield the result.  
The remaining cases are  $Co_2, Co_1, Fi_{22}, Fi_{23}$ and $BM$.   The first four have three nontrivial classes 
of involutions and $BM$ has four such classes.   In these cases, we just use GAP \cite{B}  to produce
$E$.   One deals with the quasi-simple sporadic groups in a similar manner \cite{B}.  

\begin{lem}  Let $G$ be a quasi-simple sporadic group.  Then there exists an elementary abelian
subgroup $E$ intersecting every conjugacy class of involutions in $G$.  
\end{lem}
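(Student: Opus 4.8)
The plan is a finite, case-by-case verification over the twenty-six sporadic simple groups and their quasi-simple covers, built on the three reductions recorded at the start of this section. First I would invoke those reductions: we may assume $Z(G)$ is a $2$-group, so $G$ is either a sporadic simple group or a central extension $Z.G_0$ with $Z$ cyclic of order $2$ (or $4$, which occurs only over $M_{22}$); in the latter case $Z$ contains a unique involution $z$, which is central and forms its own class, so any broad subgroup must contain $z$. If the Schur multiplier has an odd part in addition, one first quotients it out: involutions and their fusion are unaffected, and an elementary abelian $2$-subgroup of the quotient lifts back into $G$.

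For a sporadic simple group $G_0$, I would read off the number $k$ of classes of involutions from the ATLAS. If $k=1$, take $E=\langle x\rangle$ for a noncentral involution $x$. If $k=2$ — which by the remark above holds for all but the five groups $Co_2$, $Co_1$, $Fi_{22}$, $Fi_{23}$ and $BM$ — a rank-$2$ elementary abelian subgroup suffices \emph{provided} its three involutions meet both classes; concretely I would either exhibit commuting involutions from the two classes inside a small permutation representation, or check that some class-multiplication coefficient $a_{C_i,C_j,C_k}$ with $\{i,j,k\}=\{1,2\}$ is nonzero, since $x_ix_j$ is then automatically an involution and $\langle x_i,x_j\rangle$ is broad. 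For $Co_2$, $Co_1$, $Fi_{22}$ and $Fi_{23}$, with three classes $C_1,C_2,C_3$, the third reduction applies once we find $x_i\in C_i$ with $x_1x_2x_3=1$, i.e. once we verify $a_{C_1,C_2,C_3}\neq 0$ in the character table; then $\langle x_1,x_2\rangle$ works. The Baby Monster has four classes, so no rank-$2$ subgroup can work, and here I would run a direct computer search in a standard representation for an elementary abelian $2$-subgroup of rank $\geq 3$ whose seven involutions cover all four classes.

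For the covers $Z.G_0$ — the finitely many with $Z$ of even order, those lying over $M_{12}$, $M_{22}$, $J_2$, $HS$, $Ru$, $Suz$, $Co_1$, $Fi_{22}$ and $BM$ — I would first determine from the character-table library which noncentral involutions of $G_0$ lift to involutions of $Z.G_0$ (an involution lifts precisely when its full preimage has exponent $2$, equivalently it does not square to $z$) and how those classes fuse or split. With the resulting list of involution classes of $Z.G_0$ — the central $z$ together with finitely many noncentral ones — I would again use GAP, and explicit matrix or permutation representations where needed, to produce an elementary abelian subgroup containing $z$ together with a representative of each noncentral class; for most covers a rank-$2$ or rank-$3$ search suffices.

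The main obstacle is purely computational and is concentrated in the largest groups, $BM$ and $2.BM$ (and, to a lesser degree, $Co_1$, $2.Co_1$ and $Fi_{23}$): their faithful representations are large, and a naive search for a small elementary abelian $2$-subgroup with the prescribed class profile is expensive. Here one relies on the computations carried out by Breuer, which confirm in each remaining case that a suitable $E$ exists; everything else follows routinely from the ATLAS data and the GAP character-table library.
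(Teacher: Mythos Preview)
Your proposal is correct and follows essentially the same route as the paper: reduce to the case $Z(G)$ a $2$-group, dispose of the sporadic groups with at most two involution classes via the elementary observations at the start of the section, and handle the five exceptions $Co_2$, $Co_1$, $Fi_{22}$, $Fi_{23}$, $BM$ and the covers by invoking Breuer's GAP computations. Your treatment is slightly more explicit than the paper's (you spell out the structure-constant check for the two- and three-class cases and list the relevant covers), but the underlying argument is the same.
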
 

\section{Group of Lie type in characteristic $2$}

In this section $q$ is a power of $2$.   We consider the finite simple groups of Lie type over $\FF_q$. 
We will typically work with the simply connected group (which will have center of odd order) which
is sufficient.    We note the result that centralizer of an involution is connected.  
It is not true that all centralizers of unipotent elements are connected.

\begin{lem}  Let $G$ be a connected reductive algebraic group over any algebraically closed field
of characteristic $2$.   Let $g \in G$ be an involution.   Then $C_G(g)$ is connected.
\end{lem}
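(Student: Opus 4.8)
The plan is to reduce to the case of a simple algebraic group and then invoke the classical structure theory for centralizers of semisimple elements. In characteristic $2$, an involution $g \in G$ is a \emph{unipotent} element (since $g^2 = 1$ and $2 = 0$ in the field, the only eigenvalue is $1$), so this is genuinely a statement about a special class of unipotent elements, not about semisimple centralizers; this is the point that makes the lemma less than completely routine. First I would record the reduction steps: writing $G = Z(G)^{\circ} \cdot [G,G]$ with $[G,G]$ semisimple, and noting $Z(G)^{\circ}$ is a torus contained in every centralizer, we may assume $G$ is semisimple; passing to the simply connected cover $\widetilde{G} \to G$ (an isogeny with central, hence finite, kernel) and using that centralizers behave well under central isogenies in characteristic $p$ when the element has order $p$, we reduce to $G$ simply connected; and then $G$ is a direct product of its simple factors, so we may assume $G$ is simple and simply connected.

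Next I would use the standard fact (see Springer--Steinberg, or Liebeck--Seitz) that in a simply connected semisimple group $G$, the centralizer $C_G(s)$ of any \emph{semisimple} element $s$ is connected. The trick is to realize our unipotent involution $g$ as lying in (the image of) such a semisimple centralizer, or directly to invoke the following: in good characteristic, or more relevantly here, for involutions in characteristic $2$, one has an explicit description of $C_G(g)$ via the theory of $2$-groups acting on $G$, or via Aschbacher--Seitz. Concretely, for each simple type one can exhibit $g$ as a product of reflections / as an element conjugate into a standard Levi or into a subsystem subgroup, and read off that the centralizer is generated by a maximal torus together with root subgroups $U_\alpha$ for $\alpha$ in a closed subsystem — hence connected. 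An alternative, more uniform route I would try first: embed $g$ into a one-dimensional connected subgroup is impossible (it has order $2$, not $p$ for odd $p$), so instead use that $C_G(g) \supseteq C_G(g)^{\circ}$ with the component group acting, and show the component group is trivial by a cohomological argument — $C_G(g)/C_G(g)^{\circ}$ injects into $H^1$ of something that vanishes in the simply connected case.

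The main obstacle I expect is precisely that $g$ is unipotent, so the clean "centralizers of semisimple elements in simply connected groups are connected" theorem does not apply verbatim; one really does need the characteristic-$2$-specific input that involutions behave, for centralizer purposes, like semisimple elements of order $2$ would in characteristic $\neq 2$. The cleanest honest proof is likely a case-by-case verification over the simple types ($A_n$: $g$ corresponds to a decomposition $V = V_1 \oplus V_2$ and $C = (GL_{k} \times GL_{n+1-k}) \cap SL$ up to the unipotent radical glue, which is connected; $B_n, C_n, D_n$: orthogonal/symplectic decomposition, where in characteristic $2$ the involution's type is recorded by extra data but the centralizer is still a connected group of the form $Sp \times Sp$ or similar with a unipotent radical; exceptional types: cite Aschbacher--Seitz, Liebeck--Seitz tables), checking connectedness in each. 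So the proof strategy is: state the three reductions (to semisimple, to simply connected, to simple), then cite/verify the explicit centralizer structure type by type, concluding connectedness in each case because the centralizer is in every case generated by a torus and a closed set of root subgroups. I would present this compactly, leaning on the literature (Liebeck--Seitz, \emph{Unipotent and Nilpotent Classes in Simple Algebraic Groups and Lie Algebras}) for the exceptional cases rather than reproving them.
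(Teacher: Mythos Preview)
Your reduction steps (pass to semisimple, then to simple; note that $Z(G)$ has odd order in characteristic $2$ so the isogeny class is irrelevant) and your handling of the exceptional types (cite Liebeck--Seitz, Chapter~22) match the paper exactly.

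The gap is in the classical groups. You write that for type $A_n$, ``$g$ corresponds to a decomposition $V = V_1 \oplus V_2$ and $C = (\GL_k \times \GL_{n+1-k}) \cap \SL$ up to the unipotent radical glue''. That is the centralizer of a \emph{semisimple} involution in characteristic $\ne 2$. In characteristic $2$ the involution $g = I + N$ with $N^2 = 0$ has only the eigenvalue $1$ and yields no eigenspace splitting; its centralizer is the centralizer of the nilpotent $N$, whose Levi part is $\GL_r \times \GL_{n-2r}$ (with $r = \operatorname{rk} N$) sitting under a genuine unipotent radical. Your ``$\Sp \times \Sp$ or similar'' for types $B,C,D$ is off for the same reason. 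You explicitly flagged earlier that $g$ is unipotent, so this looks like a slip, but it means the classical cases are not yet done: the connectedness claim is true, but the structure you describe is not the one you would be checking.

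The paper handles the classical groups by a quite different device, which you may find cleaner than a direct centralizer computation. By Lang's theorem, $C_G(g)$ is connected if and only if any two involutions of $G(q)$ (untwisted, $q$ large) that are conjugate in the algebraic group are already conjugate in $G(q)$. For $\SL$ and $\Sp$ the paper shows every involution is conjugate into the abelian unipotent radical $Q$ of a specific parabolic, and the Levi orbits on $Q$ are determined by $\FF_q$-rational invariants (rank, and for $\Sp$ whether the associated symmetric form is alternating), so no fusion occurs under field extension. For $\SO^+_{2n}$ one uses the embedding $\SO \le \Sp$ together with the Aschbacher--Seitz description of how $\Sp$-classes meet $\SO$. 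This avoids ever writing down the full centralizer and replaces it with an elementary finite-group conjugacy check.
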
   

\begin{proof}   Clearly, we can assume that $G$ is semisimple.  Then $Z(G)$ is finite of odd order
and so the result does not depend upon the isogeny class of $G$.  So we may
assume that $G$ is a direct product and so it suffices to assume that $G$ is simple.   

If $G$ is exceptional, we just quote \cite[Chapter 22]{LS}.   So assume that $G$ is classical.
By Lang's theorem, it suffices to show that if two involutions in $G(q)$  (untwisted) are conjugate in
the algebraic group, they are already conjugate in $G(q)$ (and indeed we may assume that $q$
is large).    If $G=\SL_{2n}(q)$, then any involution is conjugate to one in the unipotent radical $Q$
of the stabilizer of $n$-dimensional space and two elements in $Q$ are conjugate if and only
if they have the same rank (identify $Q$ with $n \times n$ matrices), whence the result.   If
$G=\SL_n(q)$, we can reduce to the case that $n$ is even (or use a slightly different unipotent
radical).   Similarly if $G = \Sp_{2n}(q)$, then any involution acts trivially on a totally singular 
$n$-dimensional subspace \cite{FGS} and so is contained in the unipotent radical $Q$ of the stabilizer of
such a space \cite{FGS}.    We can identify $Q$ with the space of symmetric  bilinear forms on an $n$-dimensional 
space and the orbits of the Levi subgroup are the congruence classes of such forms.  Two forms
are congruent if and only they have the same rank and either both are alternating or not and again
the result holds.

Next consider $G = \SO^+_{2n}(q)=\SO(V)$.   Let ${\bar G}$ be the corresponding algebraic group.
Let $C$ be a $G$-conjugacy class of involutions and let ${\bar C}$ be the conjugacy class
of $\bar{G}$ containing $C$.     
Recall that $G \le H = \Sp_{2n}(q)$.   It follows by  \cite{AS} or \cite{FGS} that if $D$ is the $H$-class containing
$C$, then $D \cap G = C$ or $C \cup \tau(C)$ for $\tau$ any element of $\OO_{2n}^+(q)$ not in $\SO_{2n}^+(q)$.   By
the result for symplectic groups, $D$ is precisely the set of $\FF_q$ points of the corresponding
conjugacy class in the algebraic group.  

Let $x \in C$.   Let $( \ , \  )$ denote the alternating
form preserved by $G$.      If $(xv,v) \ne 0$ for some 
$v \in V$, then $x$ preserves a nondegenerate $2$-space and so $x$ commutes with a transvection $\tau$
(over the base field).  In particular, $\tau(C) = C$ and so $C = D \cap G =\bar{C} \cap H$ as required.
     Suppose that $(xv,v)=0$ for all $v \in V$.   It follows
just as for the symplectic case 
that $x$ acts trivially on a totally singular $n$-dimensional space and so $x$ is contained in the
unipotent radical of the stabilizer of such a space and the orbits of the Levi subgroup on the unipotent
radical correspond to the orbits of $\GL_n$ on skew symmetric matrices.  The only invariant is the rank
and so there is no fusion after field extensions.   Thus $\bar{C} \cap H = C$ as required.  

\end{proof}  

\subsection{Linear, Unitary and Symplectic Groups}

The case of $\SL_n(q)$ is obvious. 

\begin{lem}  Let  $G=\SL_n(q), n \ge 2$.   Let $m=n/2$ if $n$ is even and let $m=(n+1)/2$ if $n$ is odd.
Let $P$ be the stabilizer of an $m$-dimensional subspace $W$.   Then any involution in $G$
is conjugate to an element acting trivially on $W$.   In particular, if $Q$ is the unipotent radical
of the stabilizer of $W$, then $Q$ is elementary abelian and every involution of $G$ is conjugate to
an element of $Q$.  
\end{lem}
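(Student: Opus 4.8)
The plan is to describe involutions in $G=\SL_n(q)$ explicitly as unipotent elements and then show each is conjugate into the unipotent radical $Q$ of a maximal parabolic of a specific type. Since $q$ is even, an involution $x\in G$ is a unipotent element with $x^2=1$, so its Jordan form consists of blocks of size $1$ and $2$ only; say there are $r$ blocks of size $2$ and $n-2r$ blocks of size $1$. Equivalently, writing $x=1+N$ with $N^2=0$, the rank of $N$ is $r$, and $0\le r\le m$ where $m=\lfloor (n+1)/2\rfloor$ as in the statement. The key geometric observation is that $\operatorname{im}(N)\subseteq\ker(N)$ and $\dim\ker(N)=n-r\ge n-m=\lceil (n-1)/2\rceil\ge m-1$; in fact $\dim\ker(N)\ge m$ whenever $r\le m$ except possibly in a boundary case, and one checks directly that $\ker(N)$ always contains a subspace of dimension exactly $m$ on which $x$ acts trivially. (When $n$ is even and $r=m=n/2$, we have $\dim\ker N=n/2=m$ and $\ker N$ itself is the desired $W$; when $n$ is odd, $\dim\ker N=n-r\ge n-m=(n-1)/2=m-1$, which is not quite enough, so one uses instead that for $n$ odd at least one block has size $1$, giving a fixed vector, and combines $\ker N$ with... — more carefully, for $n$ odd and $r\le m=(n+1)/2$ one has $n-r\ge (n-1)/2$, and since $2r\le n$ forces $r\le (n-1)/2$, actually $n-r\ge (n+1)/2=m$, so $\ker N\supseteq$ some $m$-space.) Thus by the transitivity of $\GL(V)$, hence of $\SL(V)$ since $q>1$, on $m$-dimensional subspaces, $x$ is conjugate to an element fixing $W$ pointwise.

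Next I would identify $Q$ concretely. Fixing an ordered basis so that $W=\langle e_1,\dots,e_m\rangle$, the Levi decomposition of $P=P_W$ gives $Q\cong \operatorname{Hom}(V/W,W)$ as an abelian group via $q\mapsto (1+\tilde q)$ where $\tilde q$ sends $V\to W$ through $V/W$; explicitly $Q$ is the group of block matrices $\begin{pmatrix} I_m & A\\ 0 & I_{n-m}\end{pmatrix}$ with $A$ an $m\times(n-m)$ matrix over $\FF_q$. Since $m\ge n-m$, any such matrix squares to $\begin{pmatrix} I & A\\ 0 & I\end{pmatrix}^2=\begin{pmatrix} I & 2A\\ 0 & I\end{pmatrix}=\begin{pmatrix} I & 0\\ 0 & I\end{pmatrix}$ because $q$ is even and the $(1,1)$-block product vanishes as $A\cdot 0=0$ — more simply, $(1+\tilde q)^2=1+2\tilde q+\tilde q^2=1$ since $\tilde q^2=0$ (the image of $\tilde q$ lies in $W=\ker\tilde q$) and $2=0$ in $\FF_q$. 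Hence $Q$ is elementary abelian. Finally, an element of $G$ fixing $W$ pointwise and stabilizing $W$ need not lie in $Q$ in general, but: the subgroup of $P$ fixing $W$ pointwise is $Q\rtimes(\text{block-lower unipotent-type part})$; one checks that an involution fixing $W$ pointwise is conjugate \emph{inside} the stabilizer of the flag to an element of $Q$, or more directly, redo the first step to land $x$ in $Q$ itself by choosing $W$ inside $\ker N$ and a complement compatibly so that $N(V)\subseteq W$ and $N(W)=0$ — then $1+N\in Q$ on the nose.

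The main obstacle I anticipate is the parity bookkeeping in the first step: showing that $\ker N$ (or, when $n$ is odd, a suitable modification) genuinely contains an $m$-dimensional subspace for \emph{every} admissible rank $r$, and simultaneously arranging that the \emph{complement} can be chosen so that $N$ maps $V$ into $W$, placing $x$ literally in $Q$ rather than merely in the pointwise stabilizer of $W$. Concretely, writing $V=\bigoplus(\text{Jordan blocks})$, for each size-$2$ block $\langle u_i, Nu_i\rangle$ put $Nu_i\in W$ and for each size-$1$ block $\langle w_j\rangle$ put $w_j\in W$ as long as budget $m$ permits; since $r+(\#\text{size-}1\text{ blocks})=r+(n-2r)=n-r\ge m$, there is room to fill $W$ to dimension exactly $m$ using only kernel vectors, and the remaining $u_i$'s together with leftover fixed vectors span a complement with $N(\text{complement})\subseteq\langle Nu_i\rangle\subseteq W$. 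This gives $x=1+N\in Q$ directly. The statement that $G$ acts transitively on $m$-spaces (so the conjugation can be performed in $\SL_n$, not just $\GL_n$) is standard since the pointwise stabilizer of a basis-adapted flag already surjects onto all of $\PGL$ acting on the Grassmannian, or one simply notes $\GL_n=\SL_n\cdot Z$ with $Z$ acting trivially on subspaces.
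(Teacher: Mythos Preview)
Your argument is correct. The paper gives no proof of this lemma at all, simply prefacing it with ``The case of $\SL_n(q)$ is obvious,'' so there is nothing to compare against beyond noting that your approach---Jordan form analysis of $x=1+N$ with $N^2=0$, then choosing an $m$-space $W$ with $\operatorname{im} N\subseteq W\subseteq\ker N$---is the natural one the authors evidently had in mind. Two small remarks on exposition: your parity discussion wanders before arriving at the clean fact that $2r\le n$ forces $r\le\lfloor n/2\rfloor\le m$ and $n-r\ge\lceil n/2\rceil=m$ in both parities simultaneously, so $\operatorname{im} N$ and $\ker N$ sandwich an $m$-space directly with no case split needed; and the detour through ``elements fixing $W$ pointwise but not in $Q$'' is unnecessary once you take the cleaner route (which you do in your final paragraph) of choosing $W$ to contain $\operatorname{im} N$ from the start, landing $x$ in $Q$ on the nose.
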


For symplectic and unitary groups, we use the following elementary observation.

\begin{lem}  Let $G=\SU_{2n}(q), n \ge 2$  or $\Sp_{2n}(q), n > 2$.
Then any involution $g$ acts trivially on a totally isotropic subspace of dimension $n$
and so the elementary abelian subgroup $Q$ that is the unipotent radical of the stabilizer of
a totally singular subspace of dimension $n$ intersects every conjugacy class of involutions. 
\end{lem}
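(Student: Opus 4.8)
The plan is to handle the symplectic and unitary cases together by producing, for a given involution $g$, a totally isotropic $n$-space on which $g$ acts trivially, and then invoking the structure of the unipotent radical $Q$ of a maximal parabolic $P$ stabilizing such a space. First I would recall that over a field of characteristic $2$ an involution $g$ satisfies $(g-1)^2 = 0$, so $V_1 := \operatorname{im}(g-1) \subseteq \ker(g-1) =: V_g$, and $\dim V_1 \le \dim V_g$ forces $\dim V_g \ge n$ (half the total dimension $2n$). The key geometric claim is that $V_1$ is totally isotropic with respect to the relevant form: for the symplectic form $(\,,\,)$ one computes $( (g-1)u, (g-1)v ) = (gu,gv) - (gu,v) - (u,gv) + (u,v) = 2(u,v) - (gu,v) - (g^{-1}u,v)$ using $g$-invariance, and since $g = g^{-1}$ and the characteristic is $2$ this collapses to $0$; the unitary case is essentially identical with the Hermitian form, again using that the characteristic is $2$ so that $2(u,v)$-type terms vanish. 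Thus $V_1 \subseteq V_1^{\perp}$, and since $g$ acts trivially on $V_1$, one extends $V_1$ inside $V_g \cap V_1^{\perp}$ — which has dimension at least $n$ by a dimension count (note $V_1 \subseteq V_g$ and $V_1 \subseteq V_1^{\perp}$, and $\dim V_1^{\perp} = 2n - \dim V_1$) — to a totally isotropic $n$-space $W$ fixed pointwise by $g$.

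Next I would identify $Q$ explicitly. For $P = \operatorname{Stab}(W)$ with $W$ totally isotropic of dimension $n$ in the $2n$-dimensional space, the unipotent radical $Q$ acts trivially on $W$ and on $V/W^{\perp} = V/W$ (since $W = W^{\perp}$ when $W$ is a maximal totally isotropic space), so elements of $Q$ are determined by the induced map $V/W \to W$; the form-preservation condition cuts this down to the symmetric (respectively Hermitian, respectively alternating, depending on the group) maps, and in all these characteristic-$2$ situations $Q$ turns out to be elementary abelian — this is exactly the identification already used in the proof of the connectedness lemma, where $Q$ was identified with symmetric bilinear forms in the symplectic case and with skew-symmetric matrices in the orthogonal case. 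Since any involution $g$ fixes some maximal totally isotropic $W$ pointwise by the first paragraph, $g$ lies in $\operatorname{Stab}(W)$ and in fact, after adjusting by a suitable element of the Levi to normalize the choice, $g$ is conjugate into $Q$; more directly, any unipotent element of $P$ fixing $W$ pointwise and acting trivially on $V/W$ is already in $Q$, and one checks $g$ has this shape (its restriction to $V/W$ is trivial because $g-1$ maps into $V_1 \subseteq W$). Hence every conjugacy class of involutions meets $Q$, and $Q$ is the desired broad elementary abelian subgroup.

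The main obstacle I expect is the dimension bookkeeping needed to guarantee a \emph{totally isotropic} fixed space of the full dimension $n$, rather than merely some isotropic fixed space. One has $V_1 \subseteq V_g \cap V_1^{\perp}$ and wants to enlarge $V_1$ to an $n$-dimensional totally isotropic subspace inside $V_g$; this requires knowing that the form restricted to $V_g$ (modulo its radical, which contains $V_1$) still admits totally isotropic subspaces of the right codimension, which is where the hypothesis $n \ge 2$ (and $n > 2$ in the symplectic case, presumably to avoid small exceptional behaviour such as $\Sp_4(q)$ coinciding with an orthogonal group, or degenerate low-rank cases) enters. A secondary, more routine point is verifying that the radical $Q$ is elementary abelian: the commutator of two elements of $Q$, computed via the $V/W \to W$ description, vanishes because $W$ is totally isotropic and the quadratic terms are squares, which die in characteristic $2$; and squaring an element of $Q$ similarly gives the zero map. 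Both of these were already effectively carried out in the connectedness lemma, so I would lean on that analysis and keep this argument short.
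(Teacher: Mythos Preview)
Your approach is correct and genuinely different from the paper's. The paper treats the symplectic case by citing \cite{FGS} and handles $\SU_{2n}(q)$ by an induction on $n$: if the fixed space $V_g$ is not totally singular, one finds a nondegenerate $2$-space fixed pointwise by $g$, passes to its orthogonal complement, and repeats. Your argument instead works directly with the filtration $V_1 = \operatorname{im}(g-1) \subseteq V_g = \ker(g-1)$, shows $V_1$ is totally isotropic by a short computation, and then extends $V_1$ to a maximal totally isotropic subspace inside $V_g$. This is more uniform (symplectic and unitary are treated identically) and avoids induction, at the cost of needing the observation that $V_g/V_1$ is a nondegenerate space of even dimension $2(\dim V_g - n)$ and hence contains a Lagrangian, which pulls back to the desired $n$-dimensional totally isotropic fixed subspace. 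In fact $V_g = V_1^{\perp}$ exactly (not merely $V_g \cap V_1^{\perp} \supseteq V_1$), since you showed $V_g \subseteq V_1^{\perp}$ and the dimensions agree; this makes the ``main obstacle'' you flag evaporate, and also shows that the hypotheses $n \ge 2$, $n > 2$ play no role in your argument --- they are there only to keep the groups quasi-simple, not for the geometry. Your identification of $g$ as an element of $Q$ (because $(g-1)V \subseteq V_1 \subseteq W$ and $g|_W = 1$) and the verification that $Q$ is elementary abelian are both fine.
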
 

\begin{proof}    For $G=\Sp_{2n}(q)$,  the argument is easier (and is given in \cite{FGS}).
So we assume that $G=\SU_{2n}(q)$.  If $n=1$, this is clear.

Let $W$ be the fixed space of the involution of $g$.  
Note that $\dim W  \ge n$.   If $W$ is totally singular, the result follows.  
Otherwise $g$ is trivial on a nonsingular $1$-space $L$.
Then $g$ acts on $L^{\perp}$ and since
$\dim L^{\perp} = 2n-1$,  the fixed space of $g$ has dimension at least $n$ and so is not totally singular. 
Thus 
$g$ leaves invariant some nondegenerate $1$-space orthogonal to $L$ and so
acts trivially on a nondegenerate $2$-space.    The result then follows by induction.  
\end{proof}  

If $G=\SU_{2n+1}(q)$, then any involution is conjugate to an element of 
$\SU_{2n}(q)$.  So to prove the theorem  it suffices to consider only even dimensional
unitary groups.  Thus, the previous lemma gives the result for $\SU$ and $\Sp$. 
 
\subsection{Orthogonal Groups} 

We now consider $G=\SO_{2n}^{\epsilon}(q)=\SO(V)$ with $n \ge 4$.   This case is different than
the other families of classical groups and we have to examine the conjugacy classes of involutions
more closely.    We should note that (in characteristic $2$)  we denote the simple group by $\SO$ (so in particular
$\SO$ is not the full orthogonal group of determinant $1$ isometries preserving the quadratic 
form  -- often $SO^{\epsilon}_n(q)$  is denoted by $\Omega^{\epsilon}_n(q)$).  

Suppose that $n = 2m $ is even.   Decompose $V = V_1 \perp \ldots \perp V_m$ as the orthogonal sum of $m$ four dimensional
spaces.  If $V$ has $+$ type, choose all the summands to be of $+$ type.   Otherwise choose
$V_m$  to be of $-$ type.     Let $E$ be a Sylow $2$-subgroup of $\SO(V_1) \times \ldots \times \SO(V_m)$.  Note
that $E$ is elementary abelian and that the normalizer of $E$ contains elements in $\OO(V)$ not in $\SO(V)$.
Thus  it suffices to show that $E$ intersects every $\OO(V)$ conjugacy class of involutions.  Such involutions
are described in \cite[Chap. 6]{LS}.   They possible conjugacy classes are labelled
 $W(2)^e \oplus W(1)^f$ or $V(2)^2 \oplus W(2)^e \oplus W(1)^f$  (see \cite[Chapter 6]{LS}).   
 They are distinguished by their Jordan form and whether $(gv,v)$ vanishes identically for $v \in V$ with respect
 to the alternating form left invariant by $G$ (the classes involving $V(2)^2$ are the ones where this function does
 not vanish everywhere).  

Note that involutions in $\SO_4^+(V)$ are either trivial,  $W(2)$ or $V(2)^2$.   Involutions in $\SO_4^-(V)$ are 
either trivial or are in the class $V(2)^2$.     If $V$ has $+$ type, then it is clear
any class of involutions intersects $E$.   Similarly if  $V$ has $-$ type,  it is clear that any involution other than $W(2)^m$
is conjugate to an element of $E$.  However, this class of  involutions in the algebraic group is not invariant under the graph
automorphism (i.e. there are two $\SO$ classes which are fused in $\OO$).   Since the classes are stable under any
field automorphism, it follows that this class is not present in $\SO^-_n(q)$.    

If $n$ is odd,  decompose $V = W \perp U$ with $\dim U = 2$ and $W$ of $+$ type (and so
$U$ as the same type as $V$).  
By \cite[Chap. 6]{LS},  any involution in $g$ is conjugate to an element in $\SO(W)$ and so  the result
follows by the case that $n$ is even.    This completes the proof of the theorem in this case.

\subsection{Exceptional Groups}

Note that for the groups  ${^2}B_2(2^{2a+1})$, ${^2}F_4(2^{2a+1})'$ or $G_2(q)$, there are at most
$2$ noncentral conjugacy classes of involutions and the theorem holds -- see \cite[Chap. 22]{LS}.   
Also since centralizers of involutions are connected,  all classes of involutions intersect a subfield
group and so one can work over the base field (and so use the character tables).   We do not use this.

In $F_4(q)$, the center of a Sylow $2$-subgroup has order $q^2$ and intersects of three of the four
nontrivial classes of involutions and so the result holds in the case as well.
It is easy to see that each conjugacy class of involutions in ${^2}E_6(q)$ or $E_6(q)$ intersects $F_4(q)$.
This can be seen by the tables in \cite{LS}  -- indeed they exhibit the classes of unipotent elements 
in the exceptional groups by first working in $E_8$ and then determining which classes intersect
the smaller groups and if they split.  Alternatively, using the Jordan block structure of involutions of
$F_4$ acting on the Lie algebra and the $26$ dimensional module \cite{Law} and noting that the 
Lie algebra of $E_6$ as an $F_4$-module  is the direct sum of the Lie algebra of $F_4$ and the $26$-dimensional module, one can see there
three of the nontrivial classes in $F_4$ remain distinct in $E_6$.  Thus, the result for these cases follows from
the result for $F_4$.

In $E_7$, using the standard labelling, we see that we can choose $4$ commuting simple root subgroups
and the group generate by meets all four classes (see the labelling given in \cite{AS} or \cite{Law}).   Each
class of involutions in $E_8$ intersects $E_7$ (arguing as in the case of $F_4 < E_6$).   Thus, the theorem
holds for $E_7$ and $E_8$. 

\subsection{Exceptional Multipliers}  

If $G$ is a simple group of Lie type in characteristic $2$, then almost always its Schur mulitplier has odd order
and so there is nothing more to do.  There are a handful of cases where this fails.  See \cite[Table 6.1.3]{GLS}.
In each of the cases, one produces the required broad subgroup using GAP \cite{B}.   

\section{Groups of Lie type in odd characteristic}

In this section, we consider finite simple groups of Lie type over a field $\FF_q$ with $q$ odd.  We prove 
a slightly stronger result which makes the proof easier.   So let $X$ be a simple algebraic group over
an algebraically closed field of odd characteristic $p$.   Let $\sigma$ be a Lang-Steinberg endomorphism
of $X$ and let $H=X_{\sigma}$ the fixed points of $\sigma$.  Then $H$ is a finite group of Lie type over
a finite field $\FF_q$ for some $q$ a power of $p$.    Let $G = O^{p'}(H)$.   Then aside from the cases
that $H={^2}G_2(3)$ or $H$ is $\SL_2(3)$ or $\PGL_2(3)$, we have $G$ is quasi-simple.   Note that in the last cases $G$
is solvable and in the first case $[H,H] \cong \PSL_2(8)$.  We will exclude these cases (clearly the main theorem holds
in these cases).    

We say that a subgroup $S$ of $H$ is toral if $S$ is contained in a torus $T$ in $X$.   It follows that $S$ is contained
in a maximal torus of $H$ (which is defined to be $C_T(\sigma)$ for $T$ a $\sigma$ invariant maximal torus of $X$).
Note that if $\phi: X \rightarrow Y$ is an isogeny of algebraic groups, then tori map to tori and the inverse image of a maximal
torus is a maximal torus. 
 
We shall prove the following:

\begin{thm} \label{t:toralbroad}  Let $G$ be a finite quasi-simple group of Lie type over a field of odd characteristic.   Then there exists a toral
subgroup $E$ of $G$ which intersects every conjugacy class of involutions of $G$.
\end{thm}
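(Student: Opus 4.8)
The plan is to reduce, as far as possible, to the simply connected group and then argue one family at a time, exploiting the fact that in odd characteristic the involutions of a group of Lie type are semisimple, so each one lies in a maximal torus. First I would observe that it suffices to produce, inside a single $\sigma$-stable maximal torus $T$ of $X$, a finite subgroup $E \leq T_\sigma$ whose $H$-conjugates cover every class of involutions of $G$; toral subgroups are automatically elementary abelian once we pass to their $2$-part, but we actually want the full statement that $E$ is toral (hence abelian) and meets every involution class. Since $Z(X)$ may have even order in the simply connected case, I would work with $G = O^{p'}(H)$ directly: involutions of $G$ are images of semisimple elements of order a power of $2$ in $X_\sigma$, and two semisimple elements of $X_\sigma$ are $X_\sigma$-conjugate iff they are $X$-conjugate and lie in the same $H$-class, by Lang--Steinberg applied to the (connected, since $X$ is simply connected) centralizer.

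The core of the argument is then a torus-by-torus analysis. For the classical groups I would take $X$ of the appropriate type ($\SL_n$, $\Sp_{2n}$, $\SO_n$, or their twisted forms) and choose $T$ to be a diagonal (for untwisted) or suitably twisted maximal torus; an involution in the algebraic group is, up to conjugacy, diagonalizable with eigenvalues $\pm 1$, so it is determined by the dimensions of its $\pm 1$-eigenspaces together with, in the orthogonal and symplectic cases, the type of the two eigenspaces. A single torus of the form $T = \{\operatorname{diag}(t_1 I_{d_1}, \ldots)\}$ of maximal rank already contains a representative of every such class, because we can let the $t_i$ run over $\{\pm 1\}$ independently; the subgroup $E$ generated by these sign patterns is elementary abelian of rank equal to (essentially) the rank of $X$, lies in $T_\sigma$, and meets every semisimple conjugacy class of involutions. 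The point needing care is the passage from $X$ to $G$: in $\SL_n$ and $\Spin_n$ an element of order $2$ of $X$ whose eigenvalue pattern involves a nontrivial central element need not have a genuine lift of order $2$, and conversely a noncentral involution of $G$ may come from an element of $X_\sigma$ of order $4$. I would handle this exactly as in the characteristic-$2$ orthogonal case: enumerate the involution classes of $G$, note each is represented by a semisimple element of $X_\sigma$ lying in $T$, and check the $2$-power element realizing it can be taken inside $E$ (enlarging $E$ by one cyclic factor of order $4$ inside $T_\sigma$ if a genuine square root of a central involution is required, which does not affect torality).

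For the exceptional groups I would proceed uniformly. Fix $T$ a $\sigma$-stable maximal torus and let $T[2]$ be its $2$-torsion; the Weyl group $W = N_X(T)/T$ acts on $T[2] \cong (\Z/2)^{\operatorname{rk} X}$, and the involution classes of $X$ correspond to the nonzero $W$-orbits on $T[2]$, classified in \cite[Chapter 6]{LS} (or via Kac coordinates). I would take $E = T[2]^{\sigma}$ directly — this is toral by construction — and the claim becomes: every $H$-class of involutions of $G$ has a representative in $E$. For this I would invoke that $T[2] \subseteq T_\sigma$ when $T$ is split (true since $q$ is odd so $-1 \in \FF_q$), so $E = T[2]$ has full rank; every involution of $X_\sigma$ is $X_\sigma$-conjugate into $T$ because its centralizer is connected (reductive, as $p$ is odd and the element is semisimple), so Lang--Steinberg gives a $\sigma$-stable maximal torus in it, which is $X_\sigma$-conjugate to $T$; thus every involution of $H$ is conjugate into $T[2] = E$, and a fortiori every involution of $G \leq H$ into $E \cap G$. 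The main obstacle — and the place I expect the real work to be — is precisely the interface between $H$, $G = O^{p'}(H)$, and the quasi-simple quotient when $Z(X_\sigma)$ has even order: one must verify that an involution of the quasi-simple group $G$ always lifts to an involution (not merely an element of $2$-power order) of $X_\sigma$ lying in the chosen torus, or else build the square root into $E$ while keeping $E$ toral; this is a case-check across the isogeny types $A_n$, $D_n$, $E_6$, $E_7$ where the simply connected center is a nontrivial $2$-group, and is the analogue of the delicate bookkeeping already carried out for $\SO^\pm_{2n}$ in characteristic $2$.
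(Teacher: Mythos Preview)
Your strategy is essentially the paper's: reduce to the simply connected form, use that centralizers of semisimple elements are connected (so $X$-conjugacy coincides with $X_\sigma$-conjugacy), and then find a single $\sigma$-stable maximal torus whose finite part contains enough $2$-power torsion. But two concrete steps in your outline would not go through as written.

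First, your uniform exceptional-group argument assumes you can take $T$ split so that $T[2]\subseteq T_\sigma$. For the twisted forms ${}^2E_6(q)$, ${}^3D_4(q)$, ${}^2G_2(q)$ (and likewise ${}^2A_n$, ${}^2D_n$ among the classical families) there is no split $\sigma$-stable maximal torus, so this step fails. The paper handles this by choosing, case by case, a torus on which $\sigma$ acts by $-q$ (or an analogue), e.g.\ the $(q+1)^6$ torus in ${}^2E_6(q)$ or the $(q+1)^{n-1}$ torus in $\SU_n(q)$; such a torus still contains the full $T[2]$ of the algebraic torus, and the argument then proceeds as in your split case.

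Second, your proposed fix for the isogeny problem---``enlarging $E$ by one cyclic factor of order $4$ inside $T_\sigma$''---does not work in general, because the torus you have already chosen may contain no element of order $4$. For example, in $\Sp_{2n}(q)$ with $q\equiv 3\pmod 4$ the split torus is $(\Z/(q-1))^n$, which has exponent-$2$ Sylow $2$-subgroup, so no square root of $-I$ lives there. The paper's remedy is not to enlarge $E$ within the same torus but to switch tori according to $q\bmod 4$: use $(q-1)^n$ when $4\mid q-1$ and $(q+1)^n$ when $4\mid q+1$, so that the chosen torus captures the full $4$-torsion of the algebraic torus and hence every element of $G$ whose square is central. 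The orthogonal cases require the same manoeuvre, together with an additional split by the type of $V$ and by $n\bmod 4$ to control spinor norms; your proposal correctly flags this as the main obstacle but underestimates how the choice of torus itself must vary with $q$.
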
 

If $G$ is as in the theorem and $Z$ is a central subgroup,  then the result for $G/Z$ implies the result for $G$
(since as noted above toral subgroups lift to toral subgroups and any involution in $G$ maps to an involution in
$G$).   On the other hand, if we prove that there exists a toral subgroup $S$ of $G$ such that $S$ intersects
every conjugacy class of $2$-elements $g \in G$ with $g^2 \in Z$, then $SZ/Z$ is toral and intersects each
class of involutions in $SZ/Z$.    Thus, we can choose
a particular form of the group and prove the result needed for that form.

If $G$ is simply connected and split, the result is quite easy.

\begin{lem}  \label{l:splitsc} Let $G$ be a finite quasi-simple simply connected group of Lie type over the field of 
$q$-elements.  
Let $S$ be a maximal torus of $G$ contained in a Borel subgroup of $G$.  Then $S$ intersects every conjugacy class
of involutions of $G$.
\end{lem}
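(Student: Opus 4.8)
The plan is to transport the statement to the simply connected algebraic group $X$ with $G = X_\sigma$, and to use two standard facts about a connected reductive group: every semisimple element lies in a maximal torus, and all maximal tori are conjugate. Since $G$ is split we may fix a $\sigma$-stable maximal torus $T$ contained in a $\sigma$-stable Borel subgroup $B$, so that $\sigma$ acts on $T$ as the $q$-th power map; and since all Borel subgroups of $G$ are $G$-conjugate and the Hall $p'$-subgroups of a given Borel subgroup of $G$ are conjugate, we may assume $S = T_\sigma$. As $p$ is odd, any involution $t$ of $G$ is semisimple in $X$, hence $X$-conjugate into $T$; write $t_0$ for the resulting involution of $T$.

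The crux is that every involution of $T$ already lies in $S$. Indeed, $T \cong (\overline{\FF}_q^{\times})^r$ with $\sigma$ acting as $x \mapsto x^q$, so $T_\sigma \cong (\FF_q^{\times})^r$, and since $q$ is odd each factor $\FF_q^{\times}$ contains the unique element of order $2$ of $\overline{\FF}_q^{\times}$; hence the full $2$-torsion subgroup $\{x \in T : x^2 = 1\} \cong (\Z/2)^r$ of $T$ lies inside $T_\sigma = S$. Thus $t_0 \in S \subseteq G$. Finally, because $X$ is simply connected, $C_X(t_0)$ is connected, so by Lang's theorem the $X$-conjugacy class of $t_0$ meets $G$ in a single $G$-class; since $t$ and $t_0$ both lie in $G$ and are conjugate in $X$, they are conjugate in $G$. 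Hence every involution of $G$ is $G$-conjugate into $S$.

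In the split case there is no real obstacle — this is meant to be the easy base case — and the hypotheses enter only through the oddness of $q$ (to get $T[2] \subseteq T_\sigma$) and simple connectedness (for connectedness of $C_X(t_0)$, and to identify $G$ with $X_\sigma$). For the quasi-split groups the same scheme applies with $T$ a maximally split torus, provided one knows that every $W$-orbit on the $2$-torsion of $T$ meets the fixed subspace of the graph automorphism twisting $\sigma$; this is the only point that requires care, and it is immediate for types ${}^2A_n$ and ${}^2D_n$ and a small verification for ${}^3D_4$ and ${}^2E_6$.
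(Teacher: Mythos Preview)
Your proof is correct and follows exactly the paper's line: lift $S$ to a $\sigma$-stable maximal torus $T$ of the algebraic group $X$, observe that $S=T_\sigma\cong(\FF_q^\times)^r$ contains all of $T[2]$ since $q$ is odd, conjugate any involution of $G$ into $T[2]\subseteq S$ inside $X$, and then use simple connectedness (connected centralizers, hence Lang's theorem) to pull the conjugacy down to $G$. Your closing paragraph on twisted groups is extraneous to the lemma---the paper only applies it in the split case---and in fact the paper treats ${}^2E_6(q)$ and $\SU_n(q)$ not via the maximally split torus but via a torus isomorphic to a product of copies of $\Z/(q+1)$, so that remark, while plausible, is neither needed nor the route taken.
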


\begin{proof}  Let $T$ be a maximal torus of $X$ containing $S$.   Every involution of $X$ is conjugate to an element
of $T$ (indeed every semisimple element of $X$ is conjugate to an element of $T$).    Note that $S$ is a direct
product of $r$ copies of a cyclic group of order $q-1$ and so that $S$ contains the $2$-torsion subgroup of $T$.
Let $g \in G$ be an involution, then $g$ is conjugate in $X$ to some element of $S$.  Since $X$ is simply connected, 
$g^X \cap G = g^G$ and so $g$ is conjugate in $G$ to an element of $S$.
\end{proof}

We can now complete the proof for the exceptional groups.   The previous lemma implies the result for
the simply connected groups of type $G_2(q)$, $F_4(q)$, $E_6(q)$, $E_7(q)$ and $E_8(q)$.   Aside
from the case of $E_7(q)$, the centers are either trivial or have order $3$ and so the result holds.    

We  note that in the simply connected group $E_7(q)$, there are maximal tori   that are direct products
of seven cyclic groups of order $q-1$ and also of order $q+1$ \cite{LSS}.   Arguing as above, one of these maximal
tori contain a conjugate of any element of order $4$ in $G$ and so any element whose square is central.

This leaves only the cases of ${^2}G_2(3^{2k+1}), k \ge 1$ and ${^2}E_6(q)$.  In the first case, there is only
one class of involutions and the result follows since any semisimple element is contained in some maximal
torus.  In the case of ${^2}E_6(q)$ (and since the center has odd order, it suffices to consider the
simply connected case), there is a maximal torus that is the direct product of six copies of the cyclic
group of order $q+1$ \cite{LSS}.  This contains the $2$-torsion subgroup of a maximal torus of the algebraic group 
and we argue as above.  

We next consider the classical groups.  We will work with the form of the group that acts on the natural module $V$.  

We first consider $\SL_n(q)$.  If $n$ is odd, the center has odd order and so the result follows by Lemma \ref{l:splitsc}.  
We next handle the case that $n$ is even. 

\begin{lem}  Let $G = \SL_{2m}(q)  \le GL_{2m}(q) = L $.     Then there exists a toral subgroup $A$ of 
$L$ such that $A$ intersects every conjugacy class of elements of $G$ containing
elements whose square is central. 
\end{lem}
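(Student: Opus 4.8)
The plan is to classify, up to conjugacy, all elements $g\in G=\SL_{2m}(q)$ with $g^2\in Z:=Z(G)$, and then to take $A$ to be one of two explicit maximal tori of $L=\GL_{2m}(q)$ according to a dichotomy in $q$ and $m$. Since $q$ is odd, $Z=\{\lambda I:\lambda^{2m}=1\}$, and if $g^2=\lambda I$ then $g$ is semisimple and every eigenvalue of $g$ is a square root of $\lambda$. There are two possibilities. In \emph{Case 1}, $\lambda$ is a square in $\FF_q$, so the eigenvalues lie in $\FF_q$ and $g$ is $L$-conjugate to $\mathrm{diag}(\mu I_a,-\mu I_b)$ for some $\mu\in\FF_q^{\times}$ with $\mu^2=\lambda$ and $a+b=2m$. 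The constraints $\mu^{4m}=1$ and $\det g=\mu^{2m}(-1)^b=1$ then force the parity of $b$: it is even when $\mu^{2m}=1$ and odd when $\mu^{2m}=-1$; we call the latter situation \emph{type (1b)}. In \emph{Case 2}, $\lambda$ is a non-square; then $\mu\notin\FF_q$, $\mu^q=-\mu$, the eigenvalues $\mu$ and $-\mu$ occur with equal multiplicity $m$, and $g$ is $L$-conjugate to multiplication by $\mu$ on $V$ regarded as an $\FF_{q^2}$-vector space of dimension $m$. Computing centralizers in $L$ — namely $\GL_a(q)\times\GL_b(q)$ in Case 1 and $\GL_m(q^2)$ in Case 2 — one checks the determinant map onto $\FF_q^{\times}$ is surjective, so for each non-central such $g$ the $L$-class equals the $G$-class (for central $g$ the lemma is trivial, as $g$ lies in every maximal torus). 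Hence it suffices to produce an abelian subgroup $A\le L$ consisting of semisimple elements and meeting the $L$-class of every element listed above; such an $A$ is automatically toral, being simultaneously diagonalizable over $\overline{\FF_q}$.

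The two candidates are $A_{\mathrm s}=(\FF_q^{\times})^{2m}$, the split (diagonal) maximal torus of $L$, and $A_{\mathrm t}=(\FF_{q^2}^{\times})^m$, the maximal torus of $L$ acting as independent $\FF_{q^2}$-scalars on a fixed decomposition of $V$ into $m$ two-dimensional subspaces. Every Case 1 element is diagonalizable over $\FF_q$, hence $L$-conjugate into $A_{\mathrm s}$. Every Case 2 element lies in $A_{\mathrm t}$ (take all $m$ coordinates equal to $\mu$), and every Case 1 element with $a,b$ both even is $L$-conjugate into $A_{\mathrm t}$ (take $a/2$ coordinates equal to $\mu$ and $b/2$ equal to $-\mu$). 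Thus $A_{\mathrm s}$ works provided no Case 2 element exists, while $A_{\mathrm t}$ works provided no type (1b) element exists; in either branch the exhibited torus element has determinant $1$, so lies in $G$, and therefore, by the class equality above, is $G$-conjugate to the prescribed $g$.

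It remains — and this is the crux — to show that type (1b) elements and Case 2 elements are never both present. A type (1b) element exists exactly when $-1$ is a $2m$-th power in the cyclic group $\FF_q^{\times}$, and an order computation there shows this is equivalent to $v_2(q-1)\ge v_2(m)+2$. A Case 2 element exists exactly when there is a non-square $\lambda\in\FF_q^{\times}$ with $\lambda^m=(-1)^m$; working again inside $\FF_q^{\times}$ one finds this is equivalent to $v_2(q-1)\le\max(v_2(m),1)$. These two conditions are mutually exclusive, so at least one obstruction is absent, and we choose $A=A_{\mathrm s}$ when no Case 2 element exists and $A=A_{\mathrm t}$ otherwise. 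The delicate step, and the one I expect to require the most care, is establishing the two equivalences in this last paragraph — that is, pinning down exactly when each of the two obstructions occurs; the rest of the argument is routine linear algebra over finite fields.
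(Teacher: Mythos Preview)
Your proof is correct, and in fact your dichotomy between $A_{\mathrm s}$ and $A_{\mathrm t}$ is not merely an alternative route but appears to be genuinely needed. The paper's argument uses only the single torus $A_{\mathrm t}=(\FF_{q^2}^\times)^m$: it exhibits $w\in A_{\mathrm t}$ with $w^2=z$ (the generator of the Sylow $2$-subgroup of $Z(G)$) and then asserts that any $y\in G$ with $y^2=z^i$, $i$ odd, is conjugate to a power of $w$. But this step fails precisely in your type~(1b) situation. When $v_2(q-1)\ge v_2(m)+2$, the square roots $\pm\nu$ of the scalar $\zeta$ underlying $z$ already lie in $\FF_q$, and then every element of $A_{\mathrm t}$ with eigenvalues in $\{\nu,-\nu\}$ has each eigenvalue occurring with \emph{even} multiplicity (each $\FF_{q^2}^\times$-coordinate contributes the pair $(\alpha,\alpha^q)=(\alpha,\alpha)$ when $\alpha\in\FF_q$). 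Yet $G$ contains elements such as $\mathrm{diag}(\nu I_{2m-1},-\nu)$ squaring to $z$ with odd eigenvalue multiplicities, and these are not $L$-conjugate to anything in $A_{\mathrm t}$. A concrete instance is $y=\mathrm{diag}(2,2,2,-2)\in\SL_4(17)$, which squares to $4I$ (a generator of the Sylow $2$-subgroup of the centre) but is not $L$-conjugate to any element of $A_{\mathrm t}$.

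Your analysis shows that exactly in this regime Case~2 is absent, so the split torus $A_{\mathrm s}$ suffices instead; conversely, when Case~2 elements are present, type~(1b) is absent and $A_{\mathrm t}$ works. Thus your two-torus strategy both differs from and repairs the paper's one-torus argument. The only place to tighten the write-up is the final paragraph: the equivalences you flag as ``delicate'' are short computations in the cyclic group $\FF_q^\times$ (for type~(1b), $-1$ is a $2m$-th power iff $v_2(q-1)>v_2(2m)$; for Case~2, one seeks a nonsquare $\lambda$ with $(-\lambda)^m=1$, and splitting into $q\equiv 1$ and $q\equiv 3\pmod 4$ gives the stated bound), and these should simply be written out rather than deferred.
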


\begin{proof}  Note that semisimple elements in $G$ which are conjugate in $L$ are already
conjugate in $G$.  Let $Z$ be the Sylow $2$-subgroup of
the center of $G$ with $z$ a generator.    Consider the subgroup $B = B_1 \times \ldots \times B_m$ of $L$ with each $B_i \cong \GL_2(q)$
acting on a direct sum of $m$ two dimensional spaces.  Let $A_i$ be a cyclic subgroup of $B_i$ of order $q^2-1$
and set $A=A_1 \times \ldots \times A_m$.   Note that $A$ contains an element $w$ with $w^2=z$ (by considering
the two dimensional case).   Suppose that $y^2 = z^i$ with $y \in G$.    If $i$ is odd, then there is an odd power of $y$ with square $z$
and so $y$ is conjugate to a power of $w$.  If $i=2j$, then $yz^{-j}$ is an involution in $G$.  It is easy to see that an involution
in $G$ is conjugate to an element of $A$ (since each eigenspace is even dimensional).  Thus,  $A$ contains a conjugate of
every element of $G$ whose square is central.  Clearly $A$ is toral and so the result holds.
\end{proof}

As we noted above, this implies the result for any quotient of $\SL$.  

The same proof holds for $G=\SU_{2m}(q), m \ge 2$ and so the main result for any quotient of $\SU_{2m}(q)$.
  If $G=\SU_n(q)$ with $n$ odd,  then $Z(G)$ has odd order and we see that there
is a toral subgroup that is a direct product of $n-1$ copies of a cyclic group of order $q+1$ which contains the $2$-torsion
subgroup of a maximal torus of the algebraic group and the result follows as above.

If $G = \Sp_{2n}(q)$ with $q \equiv 1 \mod 4$, then the split torus $T$ contains the $4$-torsion
subgroup of the maximal torus containing it in the algebraic group.   Thus every element 
of order $4$ in $G$ is conjugate to an element of $T$ whence the results holds for $\Sp_{2n}(q)$
and its central quotient.   If $q \equiv 3 \mod 4$, there is a maximal torus of $G$ that is the direct
product of $n$ copies of the cyclic group of order $q+1$ which again contains the $4$-torsion subgroup
of a maximal subgroup of the algebraic group and the result follows.  

Finally we consider the quasi-simple groups related to orthogonal groups in dimension at least $7$.  We will prove the
result for the groups $G=\Omega^{\pm}_n(q) \le L = \SO^{\pm}_n(q)$.  As noted above, this suffices
to prove the result in general.

First suppose that $n = 2m +1$ is odd. 
Then  $\SO_n(q)=\SO(V)$ has no center and the simple group 
$\Omega_n(q)$ consists of the elements with spinor norm $1$ and  has index $2$.     Decompose $V = W \perp L$ with
$W$ a hyperplane such that the central element in $\SO(W)$ has spinor norm $1$.   Now decompose
$W = W_1  \perp \ldots \perp W_m$ as an orthogonal sum of $m$ nondegenerate $2$-paces with at least one summand of each type.    
If $g$ is an  involution in $\Omega_n(q)$,  then $g$ is conjugate to an element of the maximal torus $T =
\SO(W_1) \times \ldots \times \SO(W_m)$.    If the multiplicity of the trivial eigenspace of $g$ is greater than $1$, this is clear since
the class of $g$ (for $g$ an involution even in $\SO(V)$) is determined by the dimension and type of its fixed space.
If the fixed space of $g$ is $1$-dimensional and $g$ has spinor norm $1$, then the $-1$ eigenspace of $g$
has the same type as $W$ and so $g$ is conjugate to the element acting as $-1$ on $W$.   Thus, $T \cap \Omega_n(q)$
is a toral  subgroup intersecting each conjugacy class of involutions and the result holds.

Next consider the case that  $n = 4m+ 2, m \ge 2$.    First consider the case that  $L = \SO_n^{\epsilon}(q)= \SO(V)$
with $q \equiv \epsilon 1 \mod 4$,    Note that  $Z(L)=Z(G)$ has order $2$ in this case. 
Decompose $V = V_1 \perp \ldots \perp V_{2m+1}$ where the $V_i$ are $2$-dimensional spaces
of the same type as $V$.     Let $T = \SO(V_1) \times \ldots  \times \SO(V_{2m+1})$.  
Then $T$ is a maximal torus of $\SO(V)$.    It is easy to see that any involution
of spinor norm $1$  is conjugate to an element of $T$.   Moreover, if $x \in G$  and $x^2 = -I$, then
$x$ is conjugate to an element of $T$ (reduce to the two dimensional case).

   Thus,  $T \cap \Omega(V)$ intersects any conjugacy
of elements  of $G$ whose square is central.  This proves the theorem for $\Omega(V)$ and it simple quotient
and so the result holds.

Suppose that $q \equiv  \epsilon 3 \mod 4$.   In this case the nontrivial involution in $Z(L)$ has spinor
norm $-1$ and so $\Omega(V)$ has trivial center and is the simple group.  
  Decompose $V = V_1 \perp \ldots \perp V_{2m+1}$ so that 
there are at least $2$ summands of each type.   Let $T = \SO(V_1) \times \ldots \times \SO(V_{2m+1})$.   
Then $T$ intersects every conjugacy class of involutions of $\SO(V)$ and the result holds.  
 
Finally consider the case that $n=4m, m \ge 2$.    Suppose that $L = \SO_n^-(q) = \SO(V)$.   Note that in
this case $-I$ has spinor norm $-1$ and so $\Omega_n^{-}(q)$ is simple.   Decompose $V = V_1 \perp \ldots \perp V_{2m}$ 
into an orthogonal
sum of $2m$ nondegenerate $2$-dimensional spaces (note both types must occur in this decomposition).   It is clear
that any involution in $\Omega(V)$ is conjugate to an element of $T= \SO(V_1) \times \ldots \times SO(V_{2m})$ and 
the result holds.

Finally suppose that $L=\SO_n^+(q)$.   Decompose $V = V_1 \perp \ldots  \perp V_{2m}$ with each $V_i$ of dimension $2$ and all
of the same type.  If $q \equiv 1 \mod 4$, take the type of $V_i$ to be $+$ and $-$ otherwise.   In the case the  central
involution in $L$ has spinor norm $1$ and indeed the involution in $\SO(V_i)$ has spinor norm $1$.   Set $T = 
SO(V_1) \times \ldots \times SO(V_{2m})$,  a direct product of $2m$ cyclic groups of order $q \pm 1$ (and each
cyclic group has order a multiple of $4$).     If $x \in G$ and
$x^2 = -I$, then $x$ is conjugate to an element of $T$ and any involution of spinor norm $1$ is also conjugate to
an element of $T$ and the result follows.

\section{Proof of Corollary 3}

As mentioned in the introduction, a theorem of R. Kn\"orr \cite{K}, tells us that when $p$ is a prime and $G$ is a finite group of order divisible by $p$, then the irreducible character $\chi$ of $G$ lies in a $p$-block of defect zero of $G$ (hence vanishes on all elements of $G$ of order divisible by $p$) if and only if $\chi$ vanishes on all elements of order $p$ in $G$. Hence when $G$ has an elementary Abelian subgroup $E$ which meets every conjugacy class of elements of order $p$ of $G$, the irreducible character $\chi$ lies in a $p$-block of defect zero of $G$ if and only if $\chi$ vanishes identically on the non-identity elements of $E$.

We may conclude, using the results of \cite{R}  that if $G$ has such an elementary Abelian $p$-subgroup $E$, then $\sum_{ a \in E^{\#}}|\chi(a)|^{2} \geq |E|-1 $ whenever $\chi$ lies in a $p$-block of positive defect of $G$.
	
Using the orthogonality relations, we deduce that if $G$ has $k_{+}$ irreducible characters lying in $p$-blocks of positive defect, we have $$ \sum_{a \in E^{\#}} |C_{G}(a)| \geq k_{+}(|E|-1).$$
In particular, we have $k_{+} \leq |C_{G}(a)|$ for some $a \in E^{\#}.$ 
	
More generally, let $G$ be a finite group, and $N \lhd G$. We will first prove that if $N$ has an elementary Abelian $p$-subgroup $E$ which meets every conjugacy class of elements of order $p$ of $N$, 
then there an element $x$ of order $p$ in $N$ such that at most $|C_{G}(x)|$ irreducible characters of $G$ lie in $p$-blocks which do not cover $p$-blocks of defect zero of $N$.

We first need a slight extension of the above result of Kn\"orr.

\begin{lem} Let $G$ be a finite group and $N$ be a normal subgroup of $G$. Let $B$ be a $p$-block of $G$ with defect group $D$ such that $D \cap N \neq 1.$ Then for any irreducible character $\chi \in B$, there is an element $ x \in N$ of order $p$ with $\chi(x) \neq 0$.
	
\end{lem}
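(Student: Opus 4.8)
The plan is to reduce the statement to Knörr's original theorem by passing to a subgroup in which $N$ contributes a block of full defect. I would proceed as follows.

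First I would pick a defect group $D$ of the block $B$ with $D \cap N \neq 1$, and I would choose a specific defect pair. Let $\chi \in B$ and let $(D,b)$ be a Sylow $B$-subpair, so that $b$ is a block of $C_G(D)$ with Brauer correspondent $b^G = B$. Set $N_1 = D \cap N$; this is a nontrivial $p$-subgroup of $N$. I would then let $H = N_G(N_1)$ (or, if it is more convenient, a subgroup of the form $N_G(D)$ or $C_G(N_1)N$), and analyse the Brauer correspondence between blocks of $G$ and blocks of $H$ via the first main theorem. The key point is that $D$ normalises $N_1$, so $D \le H$, and $D$ is a defect group of some block $\tilde B$ of $H$ with $\tilde B^G = B$. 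Moreover $N_1 \le N \cap H \trianglelefteqslant H$, so $N \cap H$ is a normal subgroup of $H$ whose order is divisible by $p$, and $\tilde B$ has a defect group $D$ with $D \cap (N\cap H) \supseteq N_1 \neq 1$.

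Next I would relate characters of $B$ to characters of $\tilde B$. By Brauer's extended first main theorem (or by the theory of the Brauer correspondent together with a suitable form of Clifford theory / the Knörr relationship between characters in corresponding blocks), every irreducible character $\chi \in B$ is "linked" to an irreducible character $\psi \in \tilde B$ in the sense that $\psi$ appears with nonzero multiplicity in the restriction or induction connecting the two, and in particular the restrictions of $\chi$ and of (a relevant collection of) such $\psi$ to the $p$-singular classes of $H$ meeting $N_1$ carry the same nonvanishing information. Then I would invoke Knörr's theorem \cite{K} in the group $N \cap H$, or rather its consequence: if $\psi \in \tilde B$ and $\tilde B$ has defect group $D$ with $D \cap (N \cap H) \neq 1$, then since $N\cap H \trianglelefteqslant H$ one can locate an element $x$ of order $p$ inside $N \cap H \le N$ with $\psi(x) \neq 0$. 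Transferring back, the same element $x$ (up to conjugacy in $G$, which does not change membership in $N$ as $N \trianglelefteqslant G$) satisfies $\chi(x) \neq 0$.

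The main obstacle I anticipate is the precise bookkeeping in the step that transfers nonvanishing of $\psi$ on a $p$-element of $N$ back to nonvanishing of $\chi$ on a $p$-element of $N$: one must be careful that the character correspondence respects restriction to the $p$-singular section lying in $N$, and that the element produced by Knörr's theorem genuinely lies in $N$ rather than merely in $H$. A clean way around this is to avoid an explicit character correspondence and instead argue directly with central characters and block idempotents: the condition "$\chi$ vanishes on all $p$-elements of $N$" is equivalent, by Knörr's argument applied relative to $N \trianglelefteqslant G$, to a statement about the image of the block idempotent $e_B$ under projection to the span of $p$-regular-in-$N$ class sums, and then one shows $D \cap N \neq 1$ forces this to fail. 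I would expect to model the proof on Knörr's original induction, replacing "elements of order $p$" by "elements of order $p$ lying in $N$" throughout, with the normality of $N$ ensuring the relevant subgroups (Sylow $p$-subgroups of $N$, their normalisers) behave well under the reductions.
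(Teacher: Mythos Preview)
Your primary route has a real gap that you half-acknowledge: there is no character-by-character correspondence between $B$ and its Brauer correspondent $\tilde B$ in $H$ that lets you transfer nonvanishing of some $\psi \in \tilde B$ at a specific $p$-element back to nonvanishing of $\chi$ there. Brauer's first main theorem matches blocks, not individual characters, and nothing in the general theory supplies a $\psi$ ``linked'' to $\chi$ in the sense your argument requires. So the first two paragraphs of the proposal do not amount to a proof, and the sentence beginning ``Then I would invoke Kn\"orr's theorem in the group $N \cap H$'' has no mechanism behind it.

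The alternative you sketch at the end---arguing directly with central characters rather than through a character correspondence---is the right idea and is exactly what the paper does. Concretely: set $\sigma = \sum_{n \in N,\, n^p = 1} n \in Z(RG)$ and show $\omega_\chi(\sigma) \in J(R)$. Since the identity contributes $1$, this gives $\omega_\chi(\sigma - 1_G) \notin J(R)$; as $\sigma - 1_G$ is a sum of $G$-class sums of nonidentity order-$p$ elements of $N$, some such $x$ has $\chi(x) \neq 0$. To obtain $\omega_\chi(\sigma) \in J(R)$: since $N_G(D) \leq N_G(D \cap N) =: H$, there is a unique Brauer correspondent $B'$ of $B$ in $H$, and the Brauer homomorphism yields $\omega_\chi(\sigma) \equiv \omega_\mu(\sigma^\ast) \pmod{J(R)}$ for some $\mu \in B'$, where $\sigma^\ast$ is the corresponding sum over $C_G(D \cap N)$. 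Writing $Z = \Omega_1(Z(D \cap N))$, one has $\sigma^\ast = Z^+ \cdot T$ for suitable $T$, so $\sigma^\ast$ has nilpotent image in $Z(FH)$ and $\omega_\mu(\sigma^\ast) \in J(R)$.

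In short: your proposal as written is not a proof; the concrete argument you need is the central-character computation you gestured at, and the key step you were missing is the factorisation $\sigma^\ast = Z^{+}T$ that forces nilpotence modulo $p$.
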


\begin{proof} Let $(R,K,F)$ be a $p$-modular system for $G$. We note that $H = N_{G}(D \cap N) \geq N_{G}(D)$ so by Brauer's First Main Theorem, there is a unique Brauer 
	correspondent block for $B$ in $N_{G}(D \cap N)$, say $B^{\prime}.$ Let $\sigma = \sum_ {\{n \in N: n^{p}=1 \} } n$, which is an element of $Z(RG)$.	
	
	We claim that $\omega_{\chi}(\sigma) \in J(R)$, so that $\omega_{\chi} (\sigma - 1_{G}) \not  \in J(R)$, and in particular, there must be some non-identity element 
	$x \in N$ of order $p$ with $\chi(x) \neq 0$.
	
	Now, using the Brauer homomorphism, there is an irreducible character $\mu \in B^{\prime}$ (which may be assumed to have $D \cap N$ in its kernel) such that 
	$\omega_{\chi}(\sigma) \equiv \omega_{\mu} (\sigma^{\ast} )$ ( mod $J(R)$ ), where $\sigma^{\ast} = \sum_{ \{n \in C_{G}(D \cap N) : n^{p} = 1 \}} n.$
	
	However, let $Z = \Omega_{1}(Z(D \cap N))$, and let $Z^{+}$ be the sum of its elements in $RG$. It is clear that $\sigma^{\ast} = Z^{+}T$ for some element $T$ of $RC_{G}(D \cap N)$ which is itself a sum of certain elements of order dividing $p$. Notice that $T$ commutes with $Z^{+}.$ It follows that $\sigma^{\ast}$ has nilpotent image in $Z(FN_{G}(D \cap N))$, so that $\omega_{\mu} (\sigma^{\ast})  \in J(R)$, which suffices to complete the proof.
	
\end{proof}	

We conclude from the Lemma (and Clifford's Theorem) that the irreducible character $\chi$ of $G$ lies over characters in $p$-blocks of positive defect of $N$ if and only if $\chi(x) \neq 0$ for some element $x$ of order $p$ in $N$.

In particular, if $E$ is the above broad elementary Abelian $p$-subgroup of $N$, then no irreducible character $\chi$ of $G$ which lies over irreducible characters in $p$-blocks of positive defect zero of $N$ can vanish identically on $E^{\#}.$ We may then conclude as above that there is an element $x$ of order $p$ in $N$ such that the total number of irreducible characters of $G$ which do not vanish identically on $p$-singular elements of $N$ is at most $|C_{G}(x)|$.

Finally, we turn our attention to the case $p = 2$, $O(G) = 1$  and $N = F^{\ast}(G)$. If $O_{2}(G) \neq 1$, let $Z = \Omega_{1}(Z(O_{2}(G)) ) \neq 1.$  Then by Clifford's Theorem, whenever $\chi$ is an irreducible character of $G$, ${\rm Res}^{G}_{Z}(\chi)$ is not a multiple of the regular character, so that $\chi$ does not vanish identically on $Z^{\#}.$ Hence, as before, we have $\sum_{ z \in Z^{\#}}|\chi(z)|^{2} \geq |Z|-1$, and we have $k(G) \leq |C_{G}(z)|$ for some $z$ of order $2$ in $Z$, where (as usual), $k(G)$ denotes the number of complex irreducible characters of $G$.

Suppose then that $O_{2}(G) = 1$. Then $F^{\ast}(G)$ is a direct product of non-Abelian simple groups, and $F^{\ast}(G)$ has a broad elementary $2$-subgroup, say $E$,  by Theorem 1 and the remarks following.  Now the arguments above tell us that there is some element $t \in E^{\#}$ such that the number of irreducible characters of $G$ which do not vanish identically on $2$-singular elements of $F^{\ast}(G)$ is at most $|C_{G}(t)|$, and the proof of Corollary 3 is complete.

We remark that if equality holds in Corollary 3, then every irreducible character of $G$  which does not vanish at $t$ takes value $\pm 1$ at $t$, and that each such character has odd degree, so lies in a $2$-block of full defect of $G$ (and has height zero in that block).

\section{Further character-theoretic applications}

One of our motivations for asking the question answered by Corollary 2 is the following:

\begin{lem} Let $G$ be a finite group, and let $t,u$ be non-trivial involutions of $G$ such that $t$ does not commute with any $G$-conjugate of $u$. Let $C = C_{G}(t)$ and $D = C_{G}(u)$.	
	Then there is a non-trivial `irreducible character $\mu$ in the principal $2$-block of $G$ such that $\mu(t)\mu(u) \neq 0$ and $\mu(1) \leq ([C:O(C)]-1)(\sqrt{[D:O(D)]-1})$.
\end{lem}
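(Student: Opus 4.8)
The plan is to encode the commuting hypothesis as the vanishing of certain class‑algebra structure constants, to use the orthogonality relations for $2$-blocks to force a non‑trivial principal‑block character with $\mu(t)\mu(u)\ne0$, and then to bound $\mu(1)$ by combining a weighted class‑algebra identity with Brauer's block‑theoretic control of $\sum_{\chi\in B_{0}}|\chi(t)|^{2}$ and $\sum_{\chi\in B_{0}}|\chi(u)|^{2}$.

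First I would record the elementary reformulation. If $x\in t^{G}$ and $y\in u^{G}$ commute then $(xy)^{2}=1$, and conversely $(xy)^{2}=1$ forces $[x,y]=1$; so the hypothesis is equivalent to the statement that $xy$ has order $\geq 3$ for all $x\in t^{G}$, $y\in u^{G}$. In particular $t\not\sim u$, and, for every $g$ with $g^{2}=1$, the class‑algebra constant attached to $(t^{G},u^{G},g^{G})$ vanishes; since $\chi(g)$ is then real, this reads $\sum_{\chi\in\operatorname{Irr}(G)}\chi(t)\chi(u)\chi(g)/\chi(1)=0$. Taking $g=1$ gives $\sum_{\chi}\chi(t)\chi(u)=0$, and taking $g=t$ gives the weighted identity $\sum_{\chi}\chi(t)^{2}\chi(u)/\chi(1)=0$; in both sums the characters of defect zero drop out by Kn\"orr's theorem, since $t$ is a $2$-element.

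Next I would locate $\mu$ in $B_{0}:=B_{0}(G)$. Because $t$ and $u$ are non‑conjugate $2$-elements, the block‑orthogonality relations give $\sum_{\chi\in B_{0}}\chi(t)\overline{\chi(u)}=0$; as $\chi(t),\chi(u)\in\mathbb{Z}$ for involutions and the trivial character contributes $1$, we obtain $\sum_{1\ne\chi\in B_{0}}\chi(t)\chi(u)=-1$, so $\mathcal{S}:=\{\,1\ne\chi\in B_{0}:\chi(t)\chi(u)\ne0\,\}$ is non‑empty, and each $\chi\in\mathcal S$ automatically lies in a block of positive defect. I would then argue — and this is the point that needs Brauer's machinery — that the weighted identity localizes to the principal block (equivalently, that the positive‑defect non‑principal blocks contribute nothing to $\sum_{\chi}\chi(t)^{2}\chi(u)/\chi(1)$), so that $\sum_{\chi\in\mathcal S}\chi(t)^{2}\chi(u)/\chi(1)=-1$.

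For the degree bound I would first prove the (hypothesis‑free) estimates $\sum_{\chi\in B_{0}}|\chi(t)|^{2}\leq[C:O(C)]$ and $\sum_{\chi\in B_{0}}|\chi(u)|^{2}\leq[D:O(D)]$: via Brauer's second main theorem, $\sum_{\chi\in B_{0}}|\chi(t)|^{2}=\sum_{b}\dim(e_{b}kC)$ (with $k$ a field of characteristic $2$), the sum taken over $2$-blocks $b$ of $C=C_{G}(t)$ with $b^{G}=B_{0}$; since the defect groups of $B_{0}$ contain a conjugate of $t$, each such $b$ behaves like a principal block and has $O(C)=O_{2'}(C)$ in its kernel, while $\sum_{b:\,O(C)\le\ker b}\dim(e_{b}kC)=[C:O(C)]$, giving the bound — likewise for $u$ and $D$. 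Consequently $\sum_{\chi\in\mathcal S}\chi(t)^{2}\leq[C:O(C)]-1$, and every $\mu\in\mathcal S$ has $|\mu(u)|\leq\sqrt{[D:O(D)]-1}$. Finally I would argue by contradiction: if every $\mu\in\mathcal S$ had $\mu(1)>([C:O(C)]-1)\,|\mu(u)|$, then, using $\chi(t)^{2}\ge0$ and $\chi(t)\ne0$ on $\mathcal S$, $1=\bigl|\sum_{\chi\in\mathcal S}\chi(t)^{2}\chi(u)/\chi(1)\bigr|\leq\sum_{\chi\in\mathcal S}\chi(t)^{2}|\chi(u)|/\chi(1)<\tfrac{1}{[C:O(C)]-1}\sum_{\chi\in\mathcal S}\chi(t)^{2}\leq1$, a contradiction; hence some $\mu\in\mathcal S$ satisfies $\mu(1)\leq([C:O(C)]-1)\,|\mu(u)|\leq([C:O(C)]-1)\sqrt{[D:O(D)]-1}$, and this $\mu$ is non‑trivial, lies in $B_{0}$, and has $\mu(t)\mu(u)\ne0$. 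The main obstacle is the step relating the weighted identity and the sums $\sum_{\chi\in B_{0}}|\chi(t)|^{2}$, $\sum_{\chi\in B_{0}}|\chi(u)|^{2}$ to the principal block: this is exactly where one must bring in Brauer's correspondence together with the fact that centralizers of $2$-elements ``see'' only the part of $B_{0}$ lying above them, i.e.\ that the relevant blocks of $C_{G}(t)$ and $C_{G}(u)$ kill their largest normal subgroups of odd order.
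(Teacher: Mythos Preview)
Your overall strategy coincides with the paper's: derive the weighted identity $\sum_{\chi}\chi(t)^{2}\chi(u)/\chi(1)=0$, localize it to the principal $2$-block $B_{0}$, deduce $\sum_{1\ne\chi\in B_{0}}\chi(t)^{2}|\chi(u)|/\chi(1)\ge 1$, and then combine the bounds $\sum_{1\ne\chi\in B_{0}}\chi(t)^{2}\le[C:O(C)]-1$ and $|\mu(u)|\le\sqrt{[D:O(D)]-1}$ via an extremal choice of $\mu$ (your contradiction argument is equivalent to the paper's choice of $\mu$ maximising $|\mu(u)|/\mu(1)$). The one genuine soft spot is precisely the step you label the ``main obstacle'': you obtain the weighted identity only at the single element $g=t$ (from the vanishing structure constant $a_{t^{G},u^{G},t^{G}}$), and vanishing of a class function at a single $2$-element does \emph{not} in general localize block-by-block.

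The paper fixes this by considering $\hat{t^{G}}\cdot\hat{t^{G}}$ rather than $\hat{t^{G}}\cdot\hat{u^{G}}$: if $t^{x}t^{y}$ has $2$-part $u^{z}$, then both $t^{x}$ and $t^{y}$ invert, hence centralise, the involution $u^{z}$, contrary to hypothesis. Thus the class function $g\mapsto\sum_{\chi}\chi(t)^{2}\chi(g)/\chi(1)$ vanishes on the \emph{entire $2$-section} of $u$, and now Brauer's Second Main Theorem immediately gives the vanishing of its $B_{0}$-component at $u$. (Exactly the same observation would complete your version: the class function $g\mapsto\sum_{\chi}\chi(t)\chi(u)\chi(g)/\chi(1)$ vanishes on the full $2$-section of $t$, because the $2$-part of a product of two involutions is centralised by each factor.) For the centralizer bounds the paper is also a little sharper than your sketch: by Brauer's Third Main Theorem the \emph{only} block $b$ of $C=C_{G}(t)$ with $b^{G}=B_{0}$ is the principal block $b_{0}(C)$, so $\sum_{\chi\in B_{0}}\chi(t)^{2}=\sum_{\theta\in b_{0}(C)}\theta(1)^{2}\le[C:O(C)]$ directly, with no need to sum over several ``principal-like'' blocks.
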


\begin{proof} We first note that $u$ is not expressible as the product of two conjugates of $t$. More generally, the product of two conjugates of $t$ never lies in the $2$-section of $u$, for if $t^{x}t^{y}$ has $2$-part $u^{z}$, then $t^{x}$ and $t^{y}$ both invert the involution $u^{z}$, contrary to hypothesis.
	
By the usual character-theoretic formula for the coefficient of $g$ in the product of two class sums, we see that the class function $$\sum_{ \chi \in {\rm Irr}(G)} \frac{\chi(t)^{2} \chi}{\chi(1)}	$$ vanishes identically on the $2$-section of $u$ in $G$.

By a well-known consequence of Brauer's Second Main Theorem, the class function $$\sum_{ \chi \in B} \frac{\chi(t)^{2} \chi}{\chi(1)}	$$ also vanishes identically on the $2$-section of $u$ in $G$, where $B$ is the principal $2$-block of $G$. It follows easily that $$\sum_{ 1 \neq \chi \in B} \frac{\chi(t)^{2}|\chi(u)|}{\chi(1)} \geq 1. $$

Choose a non-trivial  irreducible character $\mu \in B$ with $\mu(t)\mu(u) \neq 0$ and with $r(u) =  \frac{|\mu(u)|}{\mu(1)}$ maximal. Then $$\sum_{ 1 \neq \chi \in B} \chi(t)^{2}  \geq \frac{1}{r(u)}.$$ 

By Brauer's Second and Third Main Theorem, we have $$\sum_{ \chi \in B} \chi(t)^{2}	 = \sum_{ \theta \in b} \theta(1)^{2},$$ where $b$ is the principal $2$-block of $C = C_{G}(t)$ .
 Since each irreducible character $\theta$ of $b$ has $O(C)$ in its kernel, we deduce that $$\sum_{ 1 \neq \chi \in B} \chi(t)^{2} \leq [C:O(C)] - 1$$.
 
A similar argument with $u$ certainly allows us to conclude that $|\mu(u)| \leq \sqrt{[D:O(D)]-1}.$ Hence we may conclude that $$\mu(1) \leq ([C:O(C)]-1)( \sqrt{[D:O(D)]-1}),$$ and the result follows.

\end{proof}

In view of Corollary 2, this result may not be applied as it stands to finite simple groups, but we have seen that the hypotheses may be satisfied within almost simple groups, so we mention:

\begin{cor} Let $G$ be an almost simple group with $F^{\ast}(G) = S$, and let $t$ be an involution of $G \backslash S$ and $u$ be an involution of $S$ which commutes with no conjugate of $t$.
	Then the principal $2$-block of $S$ contains a $t$-stable non-trivial irreducible character of degree at most $([C:O(C)]-1)(\sqrt{[D:O(D)]-1}),$ where $C = C_{S}(t)$ and $D = C_{S}(u)$.
	
\end{cor}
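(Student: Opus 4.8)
The plan is to pass to the index‑two subgroup $\widehat{G}=S\langle t\rangle$ and to run the argument of the preceding lemma inside $\widehat{G}$, but keeping only the irreducible characters that are non‑trivial on $S$. First I would note that $\widehat{G}$ is again almost simple with $F^{\ast}(\widehat{G})=S$ (the hypothesis forces $t$ to induce a non‑trivial automorphism of $S$), that $t\in\widehat{G}\setminus S$, and that a $\widehat{G}$‑conjugate of $t$ is in particular a $G$‑conjugate of $t$, so no $\widehat{G}$‑conjugate of $t$ commutes with $u$; equivalently, $t$ commutes with no $\widehat{G}$‑conjugate of $u$. Hence the triple $(\widehat{G},t,u)$ satisfies the hypotheses of the preceding lemma, and in particular no product of two $\widehat{G}$‑conjugates of $t$ has $2$‑part $\widehat{G}$‑conjugate to $u$. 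As in that lemma, it follows that the class function $\sum_{\chi\in B}\frac{\chi(t)^{2}\chi}{\chi(1)}$ vanishes identically on the $2$‑section of $u$ in $\widehat{G}$, where $B$ denotes the principal $2$‑block of $\widehat{G}$.

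Next I would bring in Clifford theory for $S\lhd\widehat{G}$. If $\chi\in\mathrm{Irr}(\widehat{G})$ is induced from $S$ then $\chi(t)=0$, since no $\widehat{G}$‑conjugate of $t$ lies in $S$; every other $\chi$ restricts irreducibly to a $t$‑stable $\psi\in\mathrm{Irr}(S)$, and the two extensions of such a $\psi$ to $\widehat{G}$ are $\chi$ and $\chi\lambda$, where $\lambda$ is the non‑trivial linear character with kernel $S$ (so $\lambda(t)=-1$, $\lambda(u)=1$). Since $\widehat{G}/S$ is a $2$‑group, $B$ is the unique block of $\widehat{G}$ covering the principal $2$‑block $B_{0}(S)$ of $S$, so every such $\psi$ lies in $B_{0}(S)$; moreover the only $\chi\in\mathrm{Irr}(\widehat{G})$ with $\chi|_{S}=1_{S}$ are $1_{\widehat{G}}$ and $\lambda$, both lying in $B$ (the latter being linear of order $2$) and both with $\chi(t)^{2}=1$ and $\chi(u)=1$. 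Evaluating the above class function at $u$ and separating off $1_{\widehat{G}}$ and $\lambda$ gives
\[
\sum_{\chi\in B,\ \chi|_{S}\neq 1_{S}}\frac{\chi(t)^{2}\,|\chi(u)|}{\chi(1)}\;\geq\;2 .
\]
Independently, Brauer's Second and Third Main Theorems give (exactly as in the lemma) $\sum_{\chi\in B}\chi(t)^{2}=\sum_{\theta\in b}\theta(1)^{2}\leq[\,C_{\widehat{G}}(t):O(C_{\widehat{G}}(t))\,]$, where $b$ is the principal $2$‑block of $C_{\widehat{G}}(t)=C_{S}(t)\times\langle t\rangle$; since $O(C_{\widehat{G}}(t))=O(C_{S}(t))$, this upper bound equals $2[C:O(C)]$ with $C=C_{S}(t)$, and subtracting the contribution $1$ of each of $1_{\widehat{G}}$ and $\lambda$ leaves $\sum_{\chi\in B,\ \chi|_{S}\neq 1_{S}}\chi(t)^{2}\leq 2\bigl([C:O(C)]-1\bigr)$.

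Finally I would imitate the end of the lemma. Choose $\mu_{0}\in B$ with $\mu_{0}|_{S}\neq 1_{S}$, $\mu_{0}(t)\mu_{0}(u)\neq 0$ and $r=|\mu_{0}(u)|/\mu_{0}(1)$ maximal among all such characters; such $\mu_{0}$ exists by the displayed inequality. Bounding the sum in the display term by term yields $2\leq r\cdot 2([C:O(C)]-1)$, hence $\mu_{0}(1)\leq([C:O(C)]-1)\,|\mu_{0}(u)|$. Applying Brauer's Second and Third Main Theorems again, now in $S$ for the involution $u$, gives $\sum_{\psi\in B_{0}(S)}|\psi(u)|^{2}=\sum_{\theta\in B_{0}(D)}\theta(1)^{2}\leq[D:O(D)]$ with $D=C_{S}(u)$, so, since $\mu_{0}|_{S}$ is a non‑trivial character in $B_{0}(S)$, we get $|\mu_{0}(u)|^{2}=|(\mu_{0}|_{S})(u)|^{2}\leq[D:O(D)]-1$. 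Then $\mu:=\mu_{0}|_{S}$ is a $t$‑stable non‑trivial irreducible character lying in the principal $2$‑block of $S$ and of degree $\mu(1)=\mu_{0}(1)\leq([C:O(C)]-1)\sqrt{[D:O(D)]-1}$, which is the assertion. I expect the only delicate point to be the bookkeeping around the two characters of $\widehat{G}$ trivial on $S$: they are exactly what improves the vanishing‑sum bound from $1$ to $2$, and this factor $2$ is precisely what cancels the factor $2$ in $[C_{\widehat{G}}(t):O(C_{\widehat{G}}(t))]=2[C_{S}(t):O(C_{S}(t))]$, so that the centralizer bound comes out in terms of $S$‑centralizers with no spurious constant.
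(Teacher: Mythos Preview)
Your proof is correct and follows essentially the same approach as the paper's: reduce to $\widehat{G}=S\langle t\rangle$, separate off the two linear characters $1_{\widehat{G}}$ and $\lambda$ to obtain the lower bound $2$ in place of $1$, use $C_{\widehat{G}}(t)=\langle t\rangle\times C_S(t)$ so that the centralizer bound picks up a matching factor of $2$, and then restrict the chosen $\mu_0$ to $S$. Your write-up is in fact more explicit than the paper's about the Clifford-theoretic bookkeeping and about why the two factors of $2$ cancel, but the substance is the same.
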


\begin{proof} We might as well work within $\langle t \rangle S$, so we suppose that $G = \langle t \rangle S$.	We may argue in a similar fashion to the previous result. Now the principal $2$-block of $G$ contains two linear characters, $1$ and $\lambda$, say, and we also have $\lambda(t)^{2}\lambda(u) = 1.$ This time, we find that $$\sum_{ \chi \in B: \chi(1) >1} \frac{\chi(t)^{2}|\chi(u)|}{\chi(1)}
 \geq 2.$$ where $B$ is again the principal $2$-block of $G$. However, the irreducible characters $\chi \in B$ with $\chi(t) \chi(u) \neq 0$ come in pairs, both members of which lie over the same  $t$-stable irreducible character of the principal $2$-block of $S$ (if $\chi$ is one such, so is $\lambda \chi \neq \chi$, and note that $\chi(t)^{2} \chi(u)$ is unchanged on replacing $\chi$ by $\lambda \chi$). Also, $C_{G}(t) = \langle t \rangle \times  C_{S}(t)$, so the result follows in a fashion similar to the previous Corollary.

\end{proof}

\end{document}